\newtheorem{theorem}{Theorem}[section]
\newtheorem{lemma}[theorem]{Lemma}
\theoremstyle{definition}
\newtheorem{definition}[theorem]{Definition}
\theoremstyle{remark}
\numberwithin{equation}{section}
\newcommand{\R}{\mathbb{R}}
\newcommand{\eps}{\varepsilon}
\newcommand{\cD}{\mathcal{D}}
\newcommand{\inn}{\textrm{in}}
\newcommand{\out}{\textrm{out}}
\newcommand{\loc}{\textrm{loc}}
\DeclareMathOperator{\sign}{sign}
\DeclareMathOperator{\scal}{scal}
\DeclareMathOperator{\Vol}{Vol}
\DeclareMathOperator{\supp}{supp}
\begin{document}

\title[Infinite-time incompleteness of noncompact Yamabe flow]{Infinite-time incompleteness of noncompact\\ Yamabe flow}

\author[J. Takahashi]{Jin Takahashi}
\address{Department of Mathematical and Computing Science, \\
Tokyo Institute of Technology, Tokyo 152-8552, Japan}
\email{takahashi@c.titech.ac.jp}
\thanks{The first author was supported in part 
by JSPS KAKENHI 
Grant-in-Aid for Early-Career Scientists 19K14567. 
The second author was supported in part by JSPS KAKENHI 
Grant-in-Aid for Early-Career Scientists 18K13415.}

\author[H. Yamamoto]{Hikaru Yamamoto}
\address{Department of Mathematics, Faculty of Pure and Applied Science, \\
University of Tsukuba, Ibaraki 305-8571, Japan}
\email{hyamamoto@math.tsukuba.ac.jp}

\subjclass[2020]{Primary 53E99; Secondary 35K67, 35A21, 53C18. }

\begin{abstract}
We show the noninheritance of 
the completeness of the noncompact Yamabe flow. 
Our main theorem states the existence of a long time solution 
which is complete for each time 
and converges to an incomplete Riemannian metric. 
This shows the occurrence of the infinite-time incompleteness. 
\end{abstract}

\maketitle

\section{Introduction}\label{intro}

We study the inheritance of the completeness of the Yamabe flow 
\begin{equation}\label{y-flow}
	\partial_t  g_{t}= -\scal(g_{t})g_{t}, 
\end{equation}
where $\scal(g_{t})$ denotes the scalar curvature of $g_{t}$. 
In the main theorem, we construct a long time solution $(M,g_{t})$ 
which is a complete Riemannian manifold for each $t\in[0,\infty)$, 
and $g_t$ converges to an incomplete Riemannian metric on $M$ as $t\to\infty$. 
Hence, the completeness is not inherited to the limit. 
The proof relies on the study of singular solutions 
of the fast diffusion equation. 
We note that Section \ref{sec:exfas} is devoted 
to the analysis of the equation 
and can be read separately. 
In what follows, 
we briefly review known results, and then we state the main result.

\subsection{Compact Yamabe flow}
The Yamabe flow is an evolving Riemannian metric $g_{t}$ 
on a manifold $M$ satisfying \eqref{y-flow}. 
It was introduced by Hamilton \cite{H89} for a resolution of the Yamabe problem. 
The Yamabe problem, for a given Riemannian metric $g_{0}$, is asking the existence of a minimizer of the Yamabe functional 
\[E(g):=\frac{\int_{M}\scal(g)dv_{g}}{{\Vol(M,g)}^{\frac{n-2}{n}}}\]
among the conformal class of $g_{0}$, 
provided that $M$ is compact and $n$-dimensional with $n\geq 3$, 
where $dv_{g}$ is the volume form of $g$ and $\Vol(M,g)$ is the volume of $(M,g)$. 
The negative gradient flow of $E$ is 
\[
	\partial_t g_{t}
	= \left(-\scal(g_{t})+\int_{M}\scal(g_{t}) dv_{g_{t}}\right)g_{t}
\]
by scaling the time variable, 
and 
this is called the normalized  Yamabe flow, compared with \eqref{y-flow}. 
Even the normalized Yamabe flow is better to obtain a long time solution (see \cite{Y94} for instance), we prefer to study the unnormalized flow \eqref{y-flow} 
since $M$ is noncompact in this paper. 

The flow \eqref{y-flow} is called a compact (resp. noncompact) Yamabe flow 
when the underlying manifold $M$ is compact (resp. noncompact). 
The study of compact Yamabe flows is already developed deeply. 
The existence of a long time solution and 
the convergence to a constant scalar curvature metric were settled by 
Hamilton \cite{H89}, Chow \cite{C92}, Ye \cite{Y94}, 
Schwetlick and Struwe \cite{SS03} and Brendle \cite{B05} 
with making assumptions on the curvature of the initial metric weaker and weaker. 
We remark that the flows treated in these papers are the normalized one. 
The unnormalized flow \eqref{y-flow} 
may develop singularities in finite time even in the compact case. 
The study related to asymptotic behavior at the singularities, 
including the classification of singularities, 
ancient solutions and solitons, has been recently proceeded 
by Daskalopoulos, del Pino and Sesum \cite{DDS18} and 
Daskalopoulos, del Pino, King and Sesum \cite{DDKS16,DDKS17}. 
Indeed, there are many other papers contributing the compact case. 
We refer the reader to the above papers and the references given there.

\subsection{Noncompact Yamabe flow}
The noncompact case is complicated. 
There are many unexpected phenomena 
from the viewpoint of the compact Yamabe flow. 
Even the number of papers on the noncompact Yamabe flow is still limited, 
the knowledge has been gradually accumulated. 
When we study a noncompact Riemannian manifold, 
the completeness of the initial Riemannian manifold $(M,g_{0})$ is important. 
We review some works which hold without 
any assumptions on the topological type of $M$. 
The first contribution was made by Ma and An \cite{MA99}. 
They proved the short time existence of \eqref{y-flow} 
in the case where the initial Riemannian manifold is 
complete and locally conformally flat with Ricci curvature bounded from below, 
and also proved the existence of a long time solution in the case where 
the initial Riemannian manifold is complete 
with the Ricci curvature and the nonpositive scalar curvature 
bounded from below. 
Some criterions for the extension of the flow are also known. 
Ma, Cheng and Zhu \cite[Theorem 3]{MCZ12} proved that 
if $(M,g_{0})$ is a complete Riemannian manifold 
with bounded curvature and with a 
positive Yamabe constant, 
then a Yamabe flow $g_{t}$ ($t\in [0,T)$ with $T<\infty$) 
starting from $g_0$ satisfying 
$\|\scal(g_{t})\|_{L^{(n+2)/2}(M\times(0,T))}<\infty$ 
can be extended beyond the time $T$. 
Recently, Ma \cite{M19} also proved the long time existence 
and convergence of the Yamabe flow 
starting from an initial complete Riemannian metric with nonnegative scalar curvature 
under the assumption on the existence of a kind of lower solutions. 

In the above results, the initial Riemannian manifolds are complete. 
In contrast, the Yamabe flow starting from an incomplete Riemannian manifold is also interesting. 
A series of works by Giesen and Topping \cite{T10, GT10, GT11, T15} revealed the so-called \emph{instantaneously completeness}. 
They studied two-dimensional Ricci flows, note that Ricci flow and Yamabe flow coincide in the two-dimensional case, 
and they proved the existence and uniqueness of a flow 
starting from an incomplete initial Riemannian manifold so that 
it turns to be complete for positive time. 
Recently, Schulz generalized these results to the higher dimensional Yamabe flow in several settings \cite{S19, S20i, S20u, S20}. 
For instance, in \cite{S20u}, for any conformal initial metric $g$ 
on the hyperbolic space $\mathbb{H}$ with $\dim\mathbb{H}\geq 3$, 
it was proved 
that there exists an instantaneously complete Yamabe flow starting from $g$. 
These results, at least, indicate that 
completeness and incompleteness are subtle conditions along the Yamabe flow.

\subsection{Main result}
When we restrict the topological type of $(M,g_{0})$, we can say more. 
A typical case is a manifold with ends. 
For instance, the Euclidean space is a manifold with one end and a cylinder is a manifold with two ends. 
In this direction, for instance, Chen and Zhu \cite{CZ15}, Ma \cite{M21} and Chen and Wang \cite{CW21} 
studied Riemannian manifolds with one end which is asymptotically flat or Euclidean. 
These results are strongly related to ADM mass. 
Also in the case with one end, 
Daskalopoulos and Sesum \cite{DS08} 
proved the existence of a Riemannian metric $g_{0}$ on $\mathbb{R}^{n}$ 
satisfying the following properties: 
(1) complete, (2) asymptotically cylindrical, 
(3) the solution $g_{t}$ of \eqref{y-flow} starting from $g_{0}$ 
develops the singularity at some finite time $T^{\ast}$ 
and (4) $g_{t}$ becomes incomplete at some time $T\in (0,T^{\ast})$. 
Even though $g_t$ has more properties, but we focus on the above ones. 
We would like to call this phenomenon the \emph{finite-time incompleteness}. 
In this paper, in contrast to the finite-time incompleteness, 
we give an example of a Riemannian manifold $(M,g_{0})$ 
which causes the ``infinite''-time incompleteness 
considering a manifold with two ends having different 
asymptotical types. 
Here we say that a Yamabe flow $(M,g_{t})$ causes the \emph{infinite-time incompleteness} if it is defined for all $t\in[0,\infty)$, 
$g_{t}$ is complete for all $t\in[0,\infty)$ 
and $g_{t}$ converges to an incomplete Riemannian metric 
$g_{\infty}$ on $M$ as $t\to \infty$.

\begin{theorem}\label{mainthm}
Let $n\geq 3$ and $M:=\mathbb{R}^{n}\setminus\{0\}$ $(\cong S^{n-1}\times\mathbb{R})$. 
Let $E_{1}$ and $E_{2}$ be the ends of $M$ around $|x|=\infty$ and $|x|=0$, respectively. 
For each $c_{1}, c_{2}> 0$ and $(n+2)/2<\lambda<n+2$, 
there exists a Riemannian metric $g_{0}=g_{0}(c_{1},c_{2},\lambda)$ on $M$ 
satisfying the following conditions from {\rm (1)} to {\rm (4)}: 
\begin{enumerate}[(1)]
\item The end $E_{1}$ is asymptotically Euclidean of order $(n-2)\lambda/(n+2)-2>0$ if $n\geq 6$. 
\item The end $E_{2}$ is asymptotically conical of order $\tau$ with rink $(S^{n-1},Bg_{S^{n-1}})$, 
where the constants $B\in(0,1)$ and $\tau>0$ 
are explicitly determined by $n$ and $\lambda$ 
in \eqref{Btau}. 
\item The scalar curvature of $g_{0}$ is negative. 
\item 
There exists a unique long time solution $g_{t}$ $(t\in[0,\infty))$ of the Yamabe flow \eqref{y-flow} starting from $g_{0}$, 
and $g_{t}$ satisfies the following properties: 
\begin{enumerate}
\item $(M,g_{t})$ is a complete Riemannian manifold
and is conformally equivalent 
to the standard metric $g_{\mathbb{R}^{n}}$
on $\mathbb{R}^{n}\setminus\{0\}$ for all $t\in[0,\infty)$. 
\item 
The volume form $\mathop{\mathrm{vol}}(g_{t})$ of $g_t$ satisfies 
$|x|^{2n\lambda/(n+2)}\mathop{\mathrm{vol}}(g_{t})\to c_{1}^{2n/(n+2)}\mathop{\mathrm{vol}}(g_{\mathbb{R}^{n}})$ as $|x| \to 0$ and 
$\mathop{\mathrm{vol}}(g_{t})\to c_{2}^{2n/(n+2)}\mathop{\mathrm{vol}}(g_{\mathbb{R}^{n}})$ as $|x| \to \infty$
for all $t\in [0,\infty)$. 
\item 
The Yamabe constant $Y(M,g_{t})$ of $(M,g_{t})$ 
is equal to $Y(S^{n},g_{S^{n}})>0$ for all $t\in[0,\infty)$. 
\item 
$g_{t}$ converges to $g_{\infty}:=c_{2}^{4/(n+2)}g_{\mathbb{R}^{n}}$ 
in $C^2_\loc(\R^n\setminus\{0\})$ 
as $t\to\infty$. 
Especially, $(M,g_{\infty})$ is incomplete. 
\end{enumerate}
\end{enumerate}
\end{theorem}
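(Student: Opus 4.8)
The plan is to translate the entire statement into a question about an isolated singular solution of a fast diffusion equation and then to invoke the analysis of Section~\ref{sec:exfas}. Writing the flow inside the conformal class of the flat metric as $g_t=u_t^{4/(n-2)}g_{\R^n}$ on $\R^n\setminus\{0\}$ and setting $V:=u^{(n+2)/(n-2)}$, the conformal transformation law $\scal(g)=-\tfrac{4(n-1)}{n-2}\,u^{-(n+2)/(n-2)}\Delta u$ turns \eqref{y-flow} into
\[
\partial_t V=\frac{n-1}{m}\,\Delta V^{m},\qquad m:=\frac{n-2}{n+2}\in(0,1),
\]
a fast diffusion equation on $(\R^n\setminus\{0\})\times[0,\infty)$, where $\Delta$ is the Euclidean Laplacian. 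Since $\mathrm{vol}(g_t)=V^{2n/(n+2)}\,\mathrm{vol}(g_{\R^n})$ and, by the equation, $\scal(g_t)=-\tfrac{4m}{n-2}\,u^{-(n+2)/(n-2)}\,\partial_t V$, every item becomes a statement about $V$: conditions (2) and (4)(b) are the boundary behaviour $V\sim c_1|x|^{-\lambda}$ as $|x|\to0$ and $V\to c_2$ as $|x|\to\infty$; condition (3) is the sign of $\partial_t V|_{t=0}$; and the convergence in (4)(d) is $V(\cdot,t)\to c_2$ in $C^2_\loc(\R^n\setminus\{0\})$. The completeness of $g_t$ is equivalent to the divergence of the radial length $\int_0 V^{2/(n+2)}\,dr$ near the origin, which holds exactly when $\lambda\ge(n+2)/2$; this is why the lower bound on $\lambda$ is imposed, and why the limit $g_\infty=c_2^{4/(n+2)}g_{\R^n}$ (for which $V\equiv c_2$) has finite radial length and is incomplete.

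The analytic heart, which I would carry out in Section~\ref{sec:exfas}, is to produce such a $V$. The natural approach is the barrier method: construct explicit sub- and supersolutions of the fast diffusion equation that both carry the prescribed isolated singularity $c_1|x|^{-\lambda}$ at the origin and the datum $c_2$ at infinity, sandwich the solution between them, and use the comparison principle to obtain existence, uniqueness in this class, persistence of the singular asymptotics for every finite $t$, and relaxation to the stationary solution $V\equiv c_2$ as $t\to\infty$. The singular stationary solutions are exactly those with $V^m$ harmonic, which forces the critical exponent $\lambda=n+2$; for $\lambda<n+2$ the singular profile is subcritical and the solution genuinely evolves, so the upper bound $\lambda<n+2$ is precisely what makes the flow nonstationary. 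I expect this to be the main obstacle: one must keep a singularity of strength $\lambda>(n+2)/2$ alive for all finite time, so that each $g_t$ stays complete, while simultaneously forcing it to dissolve in the limit, so that $g_\infty$ becomes incomplete. The singular behaviour has to persist pointwise in $t$ yet disappear as $t\to\infty$ away from the origin.

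Granting $V$, the remaining items are verifications. For (4)(a), $g_t$ is conformal to $g_{\R^n}$ by construction and complete by the radial-length computation above. For (1), the expansion of $V$ at infinity supplied by the construction gives $V-c_2=O\big(|x|^{-((n-2)\lambda/(n+2)-2)}\big)$, which is the asserted asymptotically Euclidean order. For (2), I introduce the geodesic radial coordinate $\rho=\int V^{2/(n+2)}\,dr$; since $V\sim c_1|x|^{-\lambda}$ this gives $\rho\sim \mathrm{const}\cdot|x|^{1-2\lambda/(n+2)}$ and converts $g_t$ to $d\rho^2+B\rho^2 g_{S^{n-1}}+\cdots$ with $B=\big((2\lambda-(n+2))/(n+2)\big)^2\in(0,1)$, while the rate of the remainder yields $\tau$; these are the constants of \eqref{Btau}. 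For (4)(c), the Yamabe constant is a conformal invariant and $(M,[g_{\R^n}])$ is conformal to the round sphere with two points removed, so $Y(M,g_t)=Y(S^n,g_{S^n})$, the removed finite set not affecting the infimum. Condition (3) is read off from $\scal(g_0)=-\tfrac{4m}{n-2}u_0^{-(n+2)/(n-2)}\partial_t V|_{t=0}$ using the sign of $\partial_t V|_{t=0}$ provided by the construction. Finally (4)(d) follows since $V(\cdot,t)\to c_2$ in $C^2_\loc$, together with interior parabolic estimates and the conformal formula, gives $g_t\to g_\infty$ in $C^2_\loc$, with the incompleteness of $g_\infty$ already established.
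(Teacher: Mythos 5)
Your overall strategy is the same as the paper's: conformally reduce \eqref{y-flow} on $\R^n\setminus\{0\}$ to a fast diffusion equation for the conformal factor, construct a singular solution with prescribed behaviour $c_1|x|^{-\lambda}$ at the origin and $c_2$ at infinity by barriers, and then translate the asymptotics back into warped-product form. However, the proposal defers exactly the part that constitutes the bulk of the paper. The supersolution is easy (the initial datum itself, Lemma \ref{lem:upsol}), but the subsolution is not: the paper needs the three-piece profile of Lemma \ref{lem:loap} with the inner part $c_1[r^{-m\lambda}-a(t)r^{-m\nu}]_+^{1/m}$, carefully chosen exponents $\nu,\mu,\delta$ and matching conditions at the interfaces, precisely to keep the singularity of exact strength $\lambda$ alive for all finite $t$ while letting it dissolve as $t\to\infty$. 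Moreover, ``sandwich and apply the comparison principle'' is delicate on a punctured domain (there is no boundary at $x=0$ on which to compare), and it would at best give uniqueness among solutions trapped between your barriers; the theorem's uniqueness is unconditional in the class $X_T$ and requires the separate Herrero--Pierre-type argument of Theorem \ref{th:uni}. Finally, the higher-order expansions in (1)--(2) need control of derivatives, which in the paper comes from the gradient asymptotics (iv) of Theorem \ref{th:blowdown} (a nontrivial Tauberian-style lemma) --- although note that (1), (2) and (3) concern only the explicit initial metric $g_0$, so no flow input is actually needed there; also your claim $V-c_2=\mathcal{O}(|x|^{-((n-2)\lambda/(n+2)-2)})$ conflates the decay of the conformal factor (which is $\mathcal{O}(|x|^{-(n-2)\lambda/(n+2)})$) with the order of $F(\rho_1)-\rho_1^2$ in the warped-product normal form; the ``$-2$'' only appears after passing to the geodesic radial coordinate.

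There is one step in your plan that, as written, cannot deliver the stated conclusion: item (3). Your identity $\scal(g_t)=-\tfrac{4}{n+2}V^{-1}\partial_t V$ (your constant $-\tfrac{4m}{n-2}$ equals $-\tfrac{4}{n+2}$) is correct, but the monotonicity your own construction provides --- and which the paper proves in Lemma \ref{lem:timedec} and property (ii) of Theorem \ref{th:blowdown}, namely $\partial_t V\le 0$, since $V$ relaxes downward from $V_0\ge c_2$ to $c_2$ --- then forces $\scal(g_0)\ge 0$, the \emph{opposite} sign to the one claimed in (3). Equivalently, $u_0^{m_c}=c_1^{m_c}|x|^{-m_c\lambda}+c_2^{m_c}$ with $0<m_c\lambda<n-2$ is superharmonic, so \eqref{scalc} gives $\scal(g_0)>0$. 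So you cannot ``read off'' negativity from the sign of $\partial_t V$; you must either prove $\partial_t V|_{t=0}>0$ somewhere (which contradicts the barrier picture) or confront the fact that the sign in (3) conflicts with the monotone blow-down mechanism that drives the whole construction. The paper proves (3) by a direct radial computation of $\Delta u_0^{m_c}$; you should reconcile your formula with that computation (in particular, check the coefficient $\tfrac{n-1}{r}$ of $\partial_r$ in the radial Laplacian) before accepting either sign.
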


We remark that the conditions from (1) to (3) are only for the initial metric and (4) is for the flow itself and its limit. 
The shape of $(M,g_{0})$ is similar to a ``coffee dripper'' (see Figure \ref{fig1}). 
We remark that some terminologies in the above theorem are defined 
in Section \ref{FATG}: 
a manifold with ends, asymptotically Euclidean or conical ends 
and the Yamabe constant are 
defined in Definitions \ref{mwithe}, \ref{AEAC} and \ref{Yaminv}, 
respectively. 
We also remark that from (1) and (2), 
it follows that $(M,g_{0})$ is asymptotically flat for $n\geq 6$. 

As far as the authors know, this is the first example of the long time solution 
of the noncompact Yamabe flow 
which causes the infinite time incompleteness. 
This implies that the completeness in positive time is not inherited to the limit. 
Also, to the best of the authors' knowledge, this is the first paper constructing a nontrivial solution of the Yamabe flow starting from a complete Riemannian manifold having more than one end.

\begin{figure}
\begin{center}
\includegraphics[bb=146 558 449 691, clip , scale=1.08]{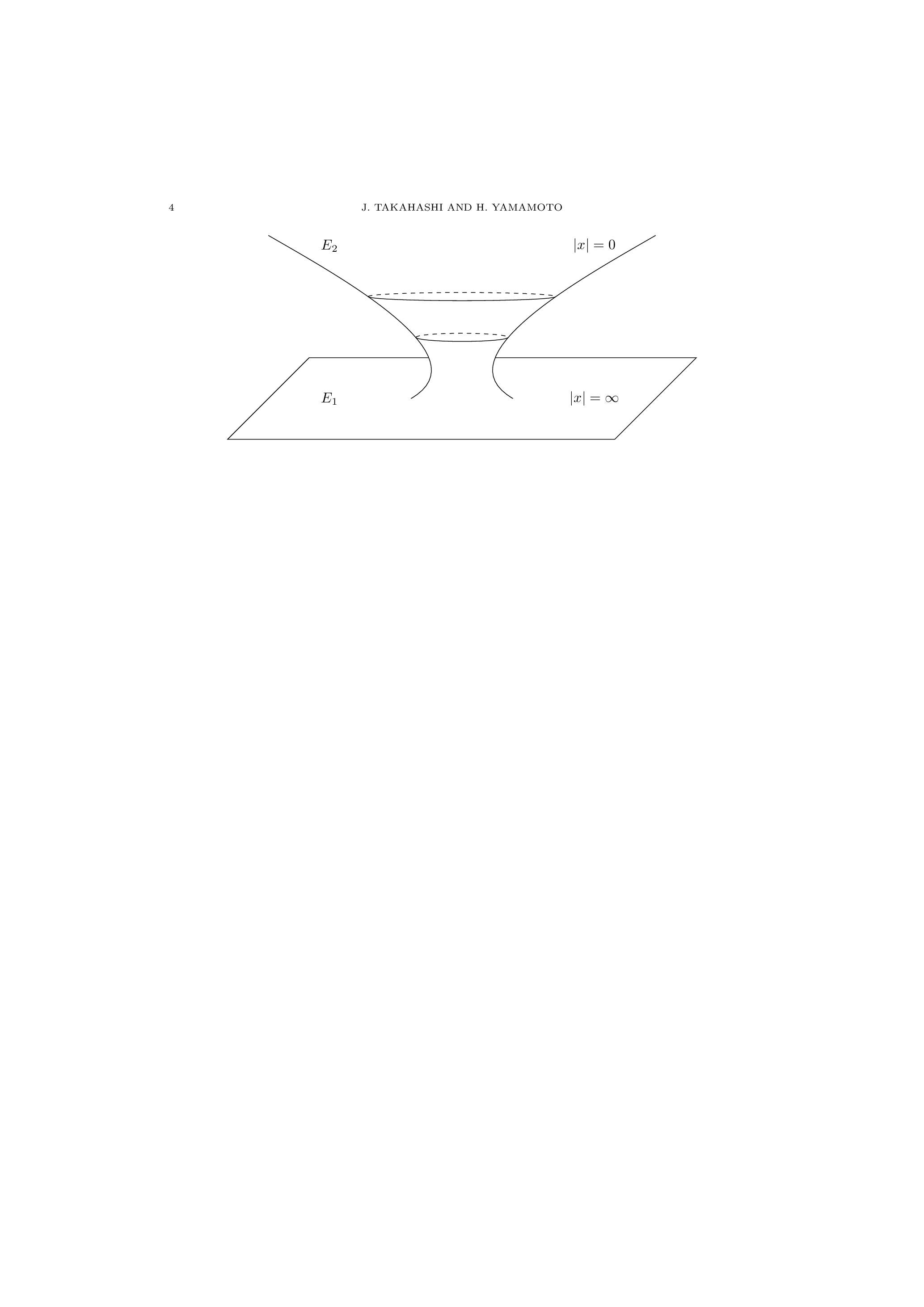}
\end{center}
\caption{Abstract shape of $(M,g_{0})$ in Theorem \ref{mainthm}}
\label{fig1}
\end{figure}

We remark that our initial Riemannian manifold $(M,g_{0})$ 
is applicable to $(M,g_{0})$ in Ma and An \cite[Theorem 1]{MA99} 
and also applicable to $(M,g_{M})$ in Schulz \cite[Theorem 1]{S20}. 
Hence, the existence of a long time solution starting 
from our $(M,g_{0})$ follows from their general theorems. 
One of the novelties of our result is a discovery 
of a complete initial metric such that its long time limit 
along the Yamabe flow exists and is incomplete.

Even the existence part follows from previous works, 
we give a purely analytical proof of the existence 
to obtain precise asymptotic behaviors of the solution for our use. 
The argument is based 
on the work of V\'azquez and Winkler \cite{VW11} 
for the fast diffusion equation 
and on the construction of suitable comparison functions 
in the same spirit of 
Fila, the first author and Yanagida \cite{FTY19}
and the same authors with Mackov\'a \cite{FMTY20}. 
Hence, readers who want to know our results 
on the fast diffusion equation from the viewpoint of PDE 
can read Section \ref{sec:exfas} directly and separately. 
As results purely in PDE,
the main results in Section \ref{sec:exfas} have some novelties
and seem to be of independent interest.

\subsection{Reduction to the fast diffusion equation}\label{Yamabe-fast}
This subsection is a bypass from geometry to analysis. 
In particular, we reduce the Yamabe flow \eqref{y-flow} on $\mathbb{R}^{n}\setminus\{0\}$ to the fast diffusion equation, 
and in Section \ref{sec:exfas} we prove the uniqueness and existence of a long time solution of the fast diffusion equation 
with a specific singular initial data 
and identify the exact behaviors 
near the singularity of the solution and of its gradient.

Let $(M,g_{0})$ be an $n$-dimensional Riemannian manifold and $v$ be a smooth positive function on $M$. Throughout this paper, we assume $n\geq 3$ 
and put 
\[
	m_{c}:=\frac{n-2}{n+2}\in(0,1).
\]
It is well-known that the scalar curvature of $g:=v^{4/(n-2)}g_{0}$ is given by 
\begin{equation}\label{scalc}
\scal(g)=v^{-\frac{n+2}{n-2}}\left(-\frac{4(n-1)}{n-2}\Delta_{g_{0}}v+\scal(g_{0})v\right), 
\end{equation}
where $\Delta_{g_{0}}$ is the Laplacian with respect to $g_{0}$. 
Since the Yamabe flow $g_{s}$ preserves the conformal class, 
we can assume that $g_{s}$ is written as $v_{s}^{4/(n-2)}g_{0}$ 
with some time dependent smooth positive function $v_{s}$ on $M$. 
Then, the equation \eqref{y-flow} is reduced to the PDE for $v_{s}$ as 
\begin{equation}\label{y-flow2}
\partial_{s}\left(v_{s}^{\frac{4}{n-2}}\right)=-v_{s}^{-1}\left(-\frac{4(n-1)}{n-2}\Delta_{g_{0}}v_{s}+\scal(g_{0})v_{s}\right). 
\end{equation}
By setting 
\begin{equation}\label{uv}
u(\,\cdot\,,t):=v^{\frac{n+2}{n-2}}(\,\cdot\,,s(t))\qquad\mbox{ with }s(t):=\frac{n-2}{(n-1)(n+2)}t, 
\end{equation}
we see that the equation \eqref{y-flow2} is transformed into 
\begin{equation}\label{y-flow3}
\partial_t u=\Delta_{g_{0}}(u^{\frac{n-2}{n+2}})-\frac{n-2}{4(n-1)}\scal(g_{0})u^{\frac{n-2}{n+2}}. 
\end{equation}
Especially, if $(M,g_{0})$ is scalar flat, 
the equation \eqref{y-flow3} coincides with the fast diffusion equation: 
\[
	\partial_t u=\Delta_{g_{0}}(u^{m_{c}}). 
\]
In this paper, we apply this reduction to the Yamabe flow on $(\mathbb{R}^{n}\setminus\{0\},g_{\mathbb{R}^{n}})$, 
where $g_{\mathbb{R}^{n}}$ is the standard metric on $\mathbb{R}^{n}$, 
of course, it is scalar flat. 
Hence, the bulk of this paper is the analysis of the fast diffusion equation which is done in Section \ref{sec:exfas}. 
After doing that, in Section \ref{FATG}, we recover the geometric meaning of the corresponding metric from the solution, that is, geometry from analysis. 

\subsection{The initial metric}
We can write the initial metric $g_{0}$ 
on $M=\mathbb{R}^{n}\setminus\{0\}$ explicitly. 
For fixed constants $c_{1}, c_{2}>0$ and $(n+2)/2<\lambda<n+2$, 
define $u_{0}\in C^{\infty}(\mathbb{R}^{n}\setminus\{0\})$ by 
\[
	u_{0}(x):= ( c_1^{m_{c}} |x|^{-{m_{c}}\lambda} 
	+ c_2^{m_{c}} )^\frac{1}{m_{c}}. 
\]
Then, indeed, the initial metric $g_{0}$ in Theorem \ref{mainthm} is given by 
\begin{equation}\label{initialg0}
g_{0}:=u_{0}^{\frac{4}{n+2}}g_{\mathbb{R}^{n}}. 
\end{equation}
We will prove that $(\mathbb{R}^{n}\setminus\{0\},g_{0})$ satisfies the conditions from (1) to (4) in Section \ref{FATG}. 
Hence, solving the Yamabe flow equation \eqref{y-flow} 
with initial metric $g_{0}$ is equivalent to solving 
the following initial value problem of the fast diffusion equation 
\[
\left\{
\begin{aligned}
	&\partial_t u=\Delta (u^{m_c}), 
	&&x\in \R^n \setminus \{0\},\;  t>0, \\
	& u(x,0) = u_0(x), 
	&&x\in \R^n \setminus \{0\},  
\end{aligned} 
\right .
\]
where $\Delta$ is the standard Laplacian on $\mathbb{R}^{n}$. 
The existence and uniqueness of a solution of this PDE 
and its properties are studied in the next section.

\subsection{Organization of this paper}
Section \ref{intro} is the introduction to the Yamabe flow. 
We start with the brief review of the compact case. 
We refer some results on the noncompact case 
and state our main theorem. 
The introduction is ended with the reduction from the Yamabe flow to the fast diffusion equation. 
Section \ref{sec:exfas} is a purely analysis section. 
In this section, 
we analyze the fast diffusion equation 
with a specific singular initial function. 
Section \ref{FATG} is completely geometry section. 
We interpret the properties of the solution 
into the sense of Riemannian geometry. 
Then, the properties in Theorem \ref{mainthm} are confirmed step by step.

\section{Singular solutions in the fast diffusion equation}\label{sec:exfas}
In this section, 
we analyze the following problem of the fast diffusion equation 
\begin{equation}\label{eq:fast}
\left\{
\begin{aligned}
	&\partial_t u=\Delta (u^m), 
	&&x\in \R^n \setminus \{0\},\; t>0, \\
	& u(x,0) = u_0(x) := 
	( c_1^m |x|^{-m\lambda} + c_2^m )^\frac{1}{m}, 
	&&x\in \R^n \setminus \{0\}, 
\end{aligned} 
\right.
\end{equation}
where $n\geq3$, $c_1>0$ and $c_2\geq0$. 
Throughout this section, we assume that 
\begin{equation}\label{eq:lamcon}
	0<m<\frac{n-2}{n}, \qquad 
	\frac{2}{1-m} < \lambda < \frac{n-2}{m}. 
\end{equation}
In particular, the results in this section are also valid 
for a geometrically important case 
\[
	m=m_c=\frac{n-2}{n+2}, \qquad 
	\frac{n+2}{2} < \lambda < n+2, 
	\qquad c_1, c_2>0. 
\]
For $0<T\leq \infty$, we write 
\[
	X_T:=C^{2,1}((\R^n\setminus\{0\})\times(0,T))
	\cap C((\R^n\setminus\{0\})\times[0,T)). 
\]
By a solution of \eqref{eq:fast}, 
we mean a nonnegative function 
$u\in X_T$ 
which satisfies \eqref{eq:fast} for some $0<T\leq \infty$. 
If a solution $u$ satisfies 
$u(x,t)\to\infty$ for each $t\in[0,T)$ as $x\to0$, 
we call $u$ a singular solution. 
The main result in this section shows 
the existence and uniqueness of a singular solution and 
its precise properties.

\begin{theorem}\label{th:blowdown}
Let $n\geq3$, $c_1>0$ and $c_2\geq0$. 
Assume that $m$ and $\lambda$ satisfy \eqref{eq:lamcon}. 
Then, there exists a unique solution $u$ of the problem \eqref{eq:fast}. 
Moreover, the solution $u$ is a positive  long time solution and 
belongs to $C^{2,1}((\R^n\setminus\{0\})\times(0,\infty))$ and 
$C((\R^n\setminus\{0\})\times[0,\infty))$. 
Furthermore, $u$ satisfies the following 
properties from {\rm(i)} to {\rm(vi)}: 
\begin{enumerate}
\item[(i)]
$u(x,t)$ is radially symmetric and nonincreasing in $|x|$
for each $t\in[0,\infty)$. 
\item[(ii)]
$u(x,t)$ is nonincreasing in $t\in[0,\infty)$ 
for each $x\in\R^n\setminus\{0\}$. 
\item[(iii)]
$|x|^\lambda u(x,t)\to c_1$ locally uniformly 
for $t\in[0,\infty)$ as $|x|\to0$. 
\item[(iv)]
$|x|^{\lambda+1} (x/|x|)\cdot \nabla u(x,t)\to -c_1\lambda$ 
locally uniformly for $t\in[0,\infty)$ as $|x|\to0$. 
\item[(v)]
$u(x,t)\to c_2$ uniformly for $t\in[0,\infty)$ as $|x|\to\infty$. 
\item[(vi)]
$u(\,\cdot\,,t) \to c_2$ in $C^2_\loc(\R^n\setminus\{0\})$ as $t\to\infty$. 
\end{enumerate}
\end{theorem}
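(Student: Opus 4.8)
The plan is to build the solution by monotone approximation, pin down the boundary behavior with explicit comparison functions in the spirit of \cite{FTY19,FMTY20}, and then read off the long-time limit from monotonicity in $t$. The starting observation is that the datum is itself a \emph{stationary supersolution}: since $u_0^m=c_1^m|x|^{-m\lambda}+c_2^m$ and $\Delta(|x|^{-m\lambda})=m\lambda(m\lambda+2-n)|x|^{-m\lambda-2}$, the hypothesis $\lambda<(n-2)/m$ forces $m\lambda+2-n<0$, hence $\Delta(u_0^m)\le0$ on $\R^n\setminus\{0\}$. I would first regularize by capping the singularity, setting $u_{0,k}:=\min\{u_0,u_0(1/k)\}$ (bounded, radial, nonincreasing in $|x|$), solve $\partial_t u=\Delta(u^m)$ on $\R^n$ with datum $u_{0,k}$ using the theory of V\'azquez--Winkler \cite{VW11} for bounded nonnegative data, and obtain classical solutions $u_k$ that increase in $k$ by comparison. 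Comparison with the stationary supersolution $u_0$ gives $u_k\le u_0$, so the monotone limit $u:=\lim_k u_k$ is finite and satisfies $c_2\le u\le u_0$ (the constant $c_2$ being a stationary solution). On every compact subset of $(\R^n\setminus\{0\})\times(0,\infty)$ the limit is then trapped between positive constants, the equation is uniformly parabolic with smooth coefficients, and interior parabolic (Schauder / De Giorgi--Nash--Moser) estimates promote $u$ to a positive classical solution in $C^{2,1}$. Radial symmetry and radial monotonicity (i) are inherited from $u_{0,k}$; monotonicity in $t$ (ii) follows because $u_0$ is a stationary supersolution, so $u(\cdot,h)\le u(\cdot,0)$ and comparison propagates this to $u(\cdot,t+h)\le u(\cdot,t)$; and (v) is immediate from $c_2\le u\le u_0$ together with $u_0\to c_2$ as $|x|\to\infty$, which also supplies the uniformity in $t$.

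The crux is the precise singular rate. The upper bound $\limsup_{x\to0}|x|^\lambda u\le c_1$ is free from $u\le u_0$. For the matching lower bound I would construct a subsolution of the form $\underline{u}^{\,m}=c_1^m|x|^{-m\lambda}-\psi$ with a correction $\psi$ of lower order near the origin; the key quantitative input is that the relative rate of change of $u_0$ is $\Delta(u_0^m)/u_0\sim|x|^{\lambda(1-m)-2}\to0$ as $x\to0$, the exponent being positive precisely because of the standing hypothesis $\lambda>2/(1-m)$. This is the mechanism by which the leading coefficient $c_1$ is preserved despite the time decay, and it yields (iii) locally uniformly in $t$. For the gradient statement (iv) I would rescale $v_\rho(y,s):=\rho^\lambda u(\rho y,s)$, which satisfies $\partial_s v_\rho=\rho^{\lambda(1-m)-2}\Delta(v_\rho^m)$ with a diffusion factor tending to $0$; on the annulus $\{1/2\le|y|\le2\}$, where (iii) traps $v_\rho$ between positive constants and forces $v_\rho\to c_1|y|^{-\lambda}$, the equation is uniformly parabolic, so interior estimates give uniform $C^1$ bounds and hence $\nabla v_\rho\to\nabla(c_1|y|^{-\lambda})$. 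Evaluating at $|y|=1$ translates back exactly to $|x|^{\lambda+1}(x/|x|)\cdot\nabla u\to-c_1\lambda$.

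For the long-time behavior (vi), monotonicity (ii) and $u\ge c_2$ give a pointwise decreasing limit $u_\infty$, radial and nonincreasing, with $c_2\le u_\infty\le u_0$; the monotone-flow structure together with the interior estimates forces $u_\infty$ to be stationary, i.e.\ $u_\infty^m$ radial and harmonic on $\R^n\setminus\{0\}$, hence $u_\infty^m=A|x|^{2-n}+B$. The condition $u_\infty\to c_2$ at infinity gives $B=c_2^m$, and the constraint $u_\infty\le u_0$ rules out $A>0$ because $|x|^{2-n}$ dominates $|x|^{-m\lambda}$ as $x\to0$ (again using $m\lambda<n-2$); thus $u_\infty\equiv c_2$, and the interior estimates upgrade the convergence to $C^2_\loc$. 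Finally, uniqueness I would obtain from a comparison principle tailored to this singular class: given two solutions, the matched singular rate from (iii) lets $(1+\eps)$ times one dominate the other on each exterior domain $\{|x|\ge r\}$, with the boundary contribution on $\partial B_r$ controlled as $r\to0$ by the common profile $c_1|x|^{-\lambda}$; letting $r\to0$ and then $\eps\to0$ yields equality.

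I expect the main obstacle to be twofold: constructing the sharp subsolution that preserves the \emph{exact} coefficient $c_1$ in (iii)--(iv), where the borderline nature of $\lambda(1-m)>2$ makes the correction $\psi$ delicate; and proving uniqueness in the very-fast-diffusion range $0<m<(n-2)/n$, where the flow is not determined by the initial datum alone, so the prescribed behavior both at the origin and at infinity must be exploited to single out the solution.
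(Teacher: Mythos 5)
Much of your outline tracks the paper's actual proof: regularized data on $\R^n$, the observation that $u_0$ is a stationary supersolution because $m\lambda<n-2$, inheritance of radial monotonicity and of $\partial_t u\le0$, a sharp subsolution of the form $c_1^m|x|^{-m\lambda}-\psi$ exploiting $\lambda(1-m)>2$, and the monotone-limit/Liouville argument for (vi) are all essentially what the paper does. However, two steps as you have written them do not go through.

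First, uniqueness. Your exterior-domain comparison with $(1+\eps)u_1$ presupposes that \emph{both} competitors obey the singular rate (iii) near $x=0$ for $t>0$, so that the boundary contribution on $\partial B_r$ can be absorbed. But the theorem asserts uniqueness in the bare class $X_T$, with no hypothesis on the behavior at the origin for positive times; the rate (iii) is a property of the constructed solution, established by sub/supersolutions only \emph{after} uniqueness identifies it with any solution. Proving uniqueness only among solutions with prescribed singular profile is essentially the weaker statement of \cite{Hu20}, which the paper explicitly improves on. (There is also a secondary problem: $(1+\eps)u_1$ is a supersolution only where $\Delta(u_1^m)\le0$, which you cannot assume for an arbitrary competitor, and a comparison principle on the unbounded exterior domain needs separate control at infinity.) The paper's Theorem \ref{th:uni} instead runs the Herrero--Pierre $L^1$ argument \cite{HP85} with cutoffs $\phi_\eps$ vanishing near the origin; the cutoff error terms scale like $\eps^{n-2/(1-m)}$, and $n-2/(1-m)>0$ is exactly where the restriction $m<(n-2)/n$ enters. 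Some such unconditional argument is needed, and your sketch does not supply it.

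Second, the gradient asymptotics (iv). Your rescaling $v_\rho(y,s)=\rho^\lambda u(\rho y,s)$ gives $\partial_s v_\rho=\rho^{\lambda(1-m)-2}\Delta(v_\rho^m)$ with $\rho^{\lambda(1-m)-2}\to0$, so the family is \emph{not} uniformly parabolic in $\rho$: the ellipticity constant degenerates, and interior (in time) parabolic estimates produce $C^1$ bounds that blow up as the diffusion factor vanishes --- equivalently, after normalizing time the solution near the origin has evolved only for a time $o(1)$, so parabolic smoothing cannot be invoked uniformly. The conclusion is morally right precisely because the dynamics freeze near the origin, but making that rigorous requires an input you have not provided (e.g.\ estimates up to the initial time using the smoothness of $\rho^\lambda u_0(\rho\,\cdot\,)$, or a different mechanism). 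The paper avoids the issue entirely: from $\partial_t u\le0$ it gets $\Delta(u^m)\le0$, integrates the resulting radial differential inequality to obtain $0\ge\partial_r u\ge -C_Tr^{-\lambda-1}$ and a one-sided bound $\eta\,\partial_\eta^2F(\eta,t)\le C_T$ for $F(\eta,t)=u(u_0^{-1}(\eta),t)$, and then applies a Tauberian-type Taylor-expansion lemma to upgrade $F(\eta)/\eta\to1$ (i.e.\ (iii)) to $F'(\eta)\to1$ (i.e.\ (iv)). A smaller point: for $c_2=0$ your construction gives no positive lower bound away from the origin (your correction $\psi$ is only controlled near $x=0$), whereas positivity in the very fast diffusion range is not automatic; the paper's subsolution carries an explicit positive outer profile $w_{\out}$ for exactly this reason.
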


Before starting the proof, we introduce the work of 
V\'azquez and Winkler \cite{VW11} 
and state the novelty of Theorem \ref{th:blowdown}. 
In \cite[Section 3]{VW11}, 
they studied the radial proper solution $U$ of 
\begin{equation}\label{eq:fastbdd}
\left\{ 
\begin{aligned}
	&\partial_t U= m^{-1} \Delta U^m, &&|x|<R, \;t>0, \\
	&U(x,t)= c_1 R^{-\lambda}, &&|x|=R, \;t>0, \\
	&U(x,0)= c_1 |x|^{-\lambda}, &&|x|<R,  \\
\end{aligned}
\right. 
\end{equation}
where $0<m<(n-2)/n$, $R>0$, $c_1>0$ and $\lambda>0$, 
and the proper solution is defined 
by the limit of solutions of appropriate regularized problems. 
In the case where $\lambda$ satisfies \eqref{eq:lamcon}, 
they proved in \cite[Lemma 3.9]{VW11} that 
$\partial_t U\leq0$ 
and $U(\,\cdot\,,t) \to c_1 R^{-\lambda}$ 
in $C^2_\loc(\R^n\setminus\{0\})$ as $t\to\infty$. 
In particular, they found the occurrence of 
the infinite-time blow-down of the solution. 
By \cite[Lemmas 3.1, 3.8]{VW11} and their proofs, 
for $0<T<\infty$, there exists $c_T>0$ depending on $T$ such that 
\[
	c_T |x|^{-\lambda} \leq U(x,t) \leq c_1|x|^{-\lambda}, 
	\qquad 0<|x|\leq R, \; 0\leq t\leq T. 
\]
They also analyzed the evolution of singularities 
in the other ranges 
$0<\lambda\leq 2/(1-m)$ and $\lambda\geq (n-2)/m$ 
and studied problems on general bounded domains 
with inhomogeneous Dirichlet boundary conditions. 
See also \cite{VW11h} for oscillating phenomena.

The problem \eqref{eq:fast} is a kind of 
the whole space case of \eqref{eq:fastbdd} 
and the proofs in this section can be applied to \eqref{eq:fastbdd}
with simple modifications. 
Thus, our main contribution in Theorem \ref{th:blowdown} 
is the identification of the exact behaviors near the singularity 
of the solution and of its gradient. 
In particular, we construct subsolutions different 
from \cite{VW11} 
in the same spirit of \cite{FMTY20,FTY19} 
and give precise lower bound of solutions, 
see Lemma \ref{lem:loap} for details. 

It seems that 
the uniqueness part in Theorem \ref{th:blowdown} is also new 
compared with the classical result of Herrero and Pierre \cite{HP85} and 
the recent result of Hui \cite{Hu20}. 
In \cite{HP85}, they proved the uniqueness 
of possibly sign-changing solutions 
in the class $u\in C([0,\infty); L^1_\loc(\R^n))$
with $\partial_t u\in L^1_\loc(\R^n\times (0,\infty))$. 
In \cite{Hu20}, the uniqueness of positive singular solutions 
was shown for solutions satisfying 
an estimate of the form 
$C_1 |x|^{-\lambda_1} \leq u(x,t) \leq C_2 |x|^{-\lambda_2}$ 
near the origin for any $t\in(0,T)$, 
see \cite[(1.13)]{Hu20}. 
On the other hand, Theorem \ref{th:blowdown} 
does not require assumptions on 
the behavior of $u(x,t)$ with $t>0$ 
or the integrability around the singular point $x=0$. 
Namely, we prove the following theorem 
based on the argument of Herrero and Pierre \cite[Theorem 2.3]{HP85}.

\begin{theorem}\label{th:uni}
Let $n\geq3$, $0<m<(n-2)/n$ and $0<T\leq \infty$. 
Assume that functions 
$u_1, u_2\in C^{2,1}((\R^n\setminus\{0\})\times(0,T))
\cap C((\R^n\setminus\{0\})\times[0,T))$ satisfy 
\begin{equation}\label{eq:fasteq}
	\partial_t u=\Delta(u |u|^{m-1})
	\qquad \mbox{ in } (\R^n\setminus\{0\})\times(0,T) 
\end{equation}
and 
$u_1(\,\cdot\,,0)=u_2(\,\cdot\,,0)$ on $\R^n \setminus \{0\}$. 
Then, $u_1\equiv u_2$ on $(\R^n\setminus\{0\})\times[0,T)$. 
\end{theorem}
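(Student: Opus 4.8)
The plan is to adapt the duality (Holmgren-type) argument of Herrero and Pierre \cite{HP85} to the punctured space, the genuinely new features being the singular point $x=0$ and the behavior at infinity. Both will be handled by exhausting $\R^n\setminus\{0\}$ with annuli $A_\delta:=\{\delta<|x|<1/\delta\}$ and carefully tracking the resulting flux terms. First I would set $w:=u_1-u_2$ and $\phi(s):=s|s|^{m-1}$, and rewrite \eqref{eq:fasteq} for the two solutions in the divergence form $\partial_t w=\Delta(a w)$, where
\[
	a(x,t):=\frac{\phi(u_1)-\phi(u_2)}{u_1-u_2}
	=\int_0^1 \phi'\bigl(u_2+\theta(u_1-u_2)\bigr)\,d\theta\ \ge 0
\]
(setting $a:=0$ where $w=0$). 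Since $\phi'(s)=m|s|^{m-1}$, the coefficient $a$ is locally bounded and measurable on $\overline{A_\delta}\times[0,\tau]$ for fixed $\delta$, and — crucially for fast diffusion — $a$ is \emph{small} exactly where the solutions are large, which is what ultimately controls the origin.

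Next I would run the dual construction. Fix $0<\tau<T$ and $\theta\in C^\infty(\R^n\setminus\{0\})$ with compact support; choose $\delta$ so small that $\supp\theta\subset A_\delta$. Replacing $a$ by a regularization $a_\eps$ with $\eps\le a_\eps$ and $a_\eps$ bounded on $\overline{A_\delta}$, solve the backward, uniformly parabolic problem
\[
	\partial_t\psi+a_\eps\Delta\psi=0 \ \text{ in } A_\delta\times(0,\tau),
	\qquad \psi=0 \ \text{ on } \partial A_\delta,
	\qquad \psi(\,\cdot\,,\tau)=\theta,
\]
obtaining $\psi=\psi_{\eps,\delta}$ with the maximum-principle bound $\|\psi\|_\infty\le\|\theta\|_\infty$ and an energy estimate bounding $\iint a_\eps(\Delta\psi)^2$. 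Multiplying $\partial_t w=\Delta(aw)$ by $\psi$, integrating over $A_\delta\times(0,\tau)$, integrating by parts twice and using $w(\,\cdot\,,0)=0$ gives the key identity
\[
	\int_{A_\delta}\theta\, w(\,\cdot\,,\tau)\,dx
	=\iint_{A_\delta\times(0,\tau)}(a-a_\eps)\,w\,\Delta\psi\,dx\,dt
	+\mathcal{B}_\delta,
\]
where $\mathcal{B}_\delta$ collects the flux terms on the two spheres $|x|=\delta$ and $|x|=1/\delta$.

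Then I would pass to the limits. Letting $\eps\to0$ with a standard choice of $a_\eps$ makes the bulk integral vanish by Cauchy--Schwarz, since $\iint a_\eps(\Delta\psi)^2$ stays bounded while $\iint (a-a_\eps)^2 a_\eps^{-1}w^2\to0$ (here I use that $w$ is continuous, hence locally bounded, on the compact annulus, with $w\to0$ as $t\to0$). Letting $\delta\to0$, the outer flux at $|x|=1/\delta$ vanishes because $\theta$ has compact support and the solutions are controlled near infinity, forcing $\psi$ and $\nabla\psi$ to be small there. Once $\mathcal{B}_\delta\to0$ is established, I conclude $\int\theta\,w(\,\cdot\,,\tau)=0$ for every admissible $\theta$, hence $w(\,\cdot\,,\tau)\equiv0$; as $\tau\in(0,T)$ is arbitrary and $w$ is continuous up to $t=0$, this yields $u_1\equiv u_2$.

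The hard part will be the inner flux on $|x|=\delta$, namely $\int_{|x|=\delta}\bigl(aw\,\partial_\nu\psi-\psi\,\partial_\nu(aw)\bigr)\,dS$, since no integrability or growth hypothesis is imposed on $u_1,u_2$ near the origin — this is precisely where Theorem \ref{th:uni} improves on \cite{HP85,Hu20}. The homogeneous boundary condition $\psi|_{\partial A_\delta}=0$ removes one term, and for the other I plan to exploit the fast-diffusion degeneracy: $aw=\phi(u_1)-\phi(u_2)$ remains controlled in the region where the $u_i$ are large while $a$ itself is small there, and I would combine this with the gradient estimates on $\psi$ coming from the regularized dual problem to show the remaining flux tends to $0$ along a suitable sequence $\delta\to0$. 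Making this flux control quantitative, uniformly in $\eps$, is the main obstacle and the step requiring the most care.
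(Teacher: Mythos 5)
Your proposal takes a dual (Holmgren--Oleinik) route, while the paper follows Herrero--Pierre's $L^1$-type argument: using Kato's inequality one gets $\partial_t|u_1-u_2|\le\Delta\left|\phi(u_1)-\phi(u_2)\right|$ distributionally, tests against $\psi=\tilde\psi^k$, and derives $\int\psi\,|u_1(t)-u_2(t)|\,dx\le C\,C[\psi]\,t^{1/(1-m)}$ with the capacity-type functional $C[\psi]=\int|\Delta\psi|^{1/(1-m)}\psi^{-m/(1-m)}dx$. The cutoff at the origin is then removed because $C[\phi_\eps]=\eps^{\,n-2/(1-m)}C[\phi]\to0$ --- this is exactly where $m<(n-2)/n$ enters --- and infinity is handled by showing $w=\int_0^t|\phi(u_1)-\phi(u_2)|\,d\tau$ is subharmonic, so its ball averages are controlled and decay like $R^{-2m/(1-m)}$. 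All the difficulty at $x=0$ and $x=\infty$ is absorbed by $C[\cdot]$; there are no flux terms to estimate.

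Your version has genuine gaps, precisely at the step you defer and at two you do not flag. First, $a=\int_0^1\phi'\bigl(u_2+\theta(u_1-u_2)\bigr)d\theta$ with $\phi'(s)=m|s|^{m-1}$ and $m-1<0$ is \emph{not} locally bounded: it blows up wherever both solutions vanish, and the theorem allows sign-changing solutions with zeros and assumes no positive lower bound. Second, the inner flux $\int_{|x|=\delta}\bigl(\phi(u_1)-\phi(u_2)\bigr)\partial_\nu\psi\,dS$ cannot be killed by ``fast-diffusion degeneracy'': the theorem imposes no growth or integrability condition near the origin, so $\phi(u_1)-\phi(u_2)$ may blow up arbitrarily fast as $|x|\to0$ (e.g.\ like $e^{m/|x|}$), whereas the boundary gradient estimate for the regularized dual problem yields at best a rate depending on $\delta$ and $\|a_\eps\|_{L^\infty(A_\delta)}$; no such rate beats an arbitrary singularity. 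The same objection applies at $|x|=1/\delta$, since ``the solutions are controlled near infinity'' is not a hypothesis. Tellingly, the assumption $m<(n-2)/n$ never enters your argument, yet it is exactly the condition $n-2/(1-m)>0$ on which the paper's removal of the origin (and the decay of $C[\psi_R]R^{-n}$ at infinity) rests; a proof that does not use it should be distrusted, because for larger $m$ the isolated singularity is no longer removable in this sense. The missing idea is the H\"older-duality estimate built around $C[\psi]$ and its $\eps^{\,n-2/(1-m)}$ scaling, which converts ``no information near $0$'' into a removable-singularity statement; without an analogue of it, the flux control you identify as the main obstacle does not close.
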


The rest of this section is organized as follows. 
In Subsection \ref{subsec:exsin}, we prove the existence of 
a long time singular solution 
and also prove the properties (i) and (ii) of Theorem \ref{th:blowdown}. 
In Subsection \ref{subsec:comuni}, 
we show Theorem \ref{th:uni}. 
In Subsection \ref{subsec:estisol}, 
we give estimates of the solution and show (iii), (iv) and (v). 
Finally, we study the convergence property (vi) 
in Subsection \ref{subsec:conv} 
and complete the proof of Theorem \ref{th:blowdown}.

\subsection{Existence and monotonicity}\label{subsec:exsin}
To construct a solution of \eqref{eq:fast}, 
we consider the following approximate problem 
\begin{equation}\label{eq:fastap}
\left\{
\begin{aligned}
	&\partial_t u=\Delta u^m, 
	&&x\in \R^n,\; t>0, \\
	& u(x,0) = u_{0,\eps}(x):=
	( c_1^m(|x|^2+\eps)^{-\frac{1}{2}m \lambda} 
	+ c_2^m+ \eps )^\frac{1}{m}, 
	&&x\in \R^n, 
\end{aligned} 
\right.
\end{equation}
with a parameter $0<\eps<1$. 
Since $\eps\leq u_{0,\eps} \leq (c_1^m \eps^{-m\lambda/2}+c_2^m+\eps)^{1/m}$, 
this problem has a unique long time 
solution $u_\eps \in X_\infty$ 
satisfying 
\begin{equation}\label{eq:roughes}
	\eps\leq u_\eps(x,t) \leq (c_1^m \eps^{-\frac{1}{2}m\lambda}+ c_2^m+\eps)^\frac{1}{m}, 
	\qquad x\in\R^n,\; t>0. 
\end{equation}

We give an upper bound of $u_\eps$ uniformly for $\eps$. 

\begin{lemma}\label{lem:upap}
For each $0<\eps<1$, 
\[
	u_\eps(x,t) \leq (c_1^m |x|^{-m\lambda} +c_2^m+1 )^\frac{1}{m}
\]
for any $x\in \R^n\setminus\{0\}$ and $t>0$. 
\end{lemma}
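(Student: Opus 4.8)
The plan is to dominate $u_\eps$ by the time-independent function
\[
	\bar u(x) := \bigl( c_1^m |x|^{-m\lambda} + c_2^m + 1 \bigr)^{1/m}
\]
and to conclude by a comparison argument. First I would check that $\bar u$ is a stationary supersolution of $\partial_t u = \Delta u^m$ on $\R^n\setminus\{0\}$. Since $\bar u$ does not depend on $t$, this amounts to $\Delta \bar u^m \le 0$, and because $\bar u^m = c_1^m|x|^{-m\lambda} + c_2^m + 1$ only the first term contributes:
\[
	\Delta \bar u^m = c_1^m\, \Delta |x|^{-m\lambda}
	= c_1^m\, m\lambda\,(m\lambda - n + 2)\, |x|^{-m\lambda - 2}.
\]
This is nonpositive precisely because the right inequality in \eqref{eq:lamcon} gives $m\lambda < n-2$, so $m\lambda - n + 2 < 0$. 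Hence $\partial_t\bar u - \Delta\bar u^m = -\Delta\bar u^m \ge 0$.

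Next I would verify the initial ordering $u_{0,\eps} \le \bar u$. Comparing the $m$-th powers termwise, $(|x|^2+\eps)^{-m\lambda/2} \le |x|^{-m\lambda}$ because $|x|^2+\eps \ge |x|^2$ and the exponent is negative, while $c_2^m + \eps \le c_2^m + 1$ since $\eps<1$. Adding these gives $u_{0,\eps}^m \le \bar u^m$, hence $u_{0,\eps}\le\bar u$ on $\R^n\setminus\{0\}$.

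It then remains to propagate this ordering in time, and here two features of the comparison domain require care. Near the origin $\bar u$ blows up while $u_\eps$ stays bounded: by \eqref{eq:roughes}, $u_\eps \le M_\eps := (c_1^m\eps^{-m\lambda/2}+c_2^m+\eps)^{1/m}$, so choosing $\delta=\delta(\eps)>0$ small enough that $\bar u \ge M_\eps$ on $\{0<|x|\le\delta\}$ forces $u_\eps \le \bar u$ there at once. On the exterior region $\{|x|>\delta\}$ both $u_\eps$ and $\bar u$ take values in a fixed compact subinterval of $(0,\infty)$ (namely $[\eps,\max(M_\eps,\bar u(\delta))]$, using $\bar u\ge(c_2^m+1)^{1/m}$), so the equation is uniformly parabolic there; writing $w:=u_\eps-\bar u$ and $u_\eps^m-\bar u^m = a\,w$ with $a$ bounded between positive constants, the supersolution property yields $\partial_t w \le \Delta(a\,w)$, while $w\le 0$ on $\{t=0\}\cup\{|x|=\delta\}$. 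The comparison (maximum) principle for bounded solutions then gives $w\le0$, i.e.\ $u_\eps\le\bar u$, on $\{|x|>\delta\}$ as well, which is the claimed bound.

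The step I expect to be the main obstacle is this last comparison on the unbounded exterior: a priori $u_\eps$ is only controlled by the large constant $M_\eps$, whereas $\bar u$ decays to $(c_2^m+1)^{1/m}$ at infinity, so one cannot simply impose an outer boundary condition on a finite sphere. The point to exploit is that away from the origin the two functions are pinched into a compact subinterval of $(0,\infty)$, which makes $\partial_t-\Delta((\cdot)^m)$ uniformly parabolic and allows a Phragm\'en--Lindel\"of-type comparison for bounded solutions with no condition at infinity; equivalently one may exhaust $\{|x|>\delta\}$ by annuli $\{\delta<|x|<R\}$ and send $R\to\infty$ with a vanishing barrier to kill the outer boundary contribution.
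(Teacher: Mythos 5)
Your argument is correct, but it takes a genuinely different route from the paper. The paper avoids the singular barrier $\bar u$ altogether: it observes that the regularized initial datum $u_{0,\eps}$ is itself a stationary supersolution of the approximate problem, since
\[
	-\Delta u_{0,\eps}^m
	= c_1^m m\lambda \Bigl( n - (2+m\lambda)\tfrac{|x|^2}{|x|^2+\eps} \Bigr)(|x|^2+\eps)^{-\frac{1}{2}m\lambda-1}
	\geq c_1^m m\lambda (n-2-m\lambda)(|x|^2+\eps)^{-\frac{1}{2}m\lambda-1} \geq 0,
\]
so the comparison principle for bounded solutions applies on all of $\R^n$ with no special treatment of the origin, giving $u_\eps(x,t)\le u_{0,\eps}(x)$; the stated bound then follows from the elementary pointwise inequality $u_{0,\eps}\le(c_1^m|x|^{-m\lambda}+c_2^m+1)^{1/m}$. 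You instead compare against the singular, time-independent function $\bar u$, which forces you to excise a neighborhood of the origin (using the crude bound \eqref{eq:roughes} to get $u_\eps\le M_\eps\le\bar u$ there) and then to run a Phragm\'en--Lindel\"of-type comparison on the unbounded exterior region where the equation is uniformly parabolic. All of these steps are sound --- the supersolution computation, the initial ordering, the pinching into a compact subinterval of $(0,\infty)$, and the exhaustion-by-annuli argument are each standard and correctly justified --- but the paper's choice of comparison function makes the entire domain-splitting and boundary-at-infinity discussion unnecessary. What your version buys is that it works directly with the limiting barrier and makes explicit why the constant $+1$ (rather than $+\eps$) appears; what the paper's version buys is a one-line application of the comparison principle for bounded solutions. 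Note also that both computations use the same structural fact, namely $m\lambda<n-2$ from \eqref{eq:lamcon}, so neither is more general than the other in terms of hypotheses.
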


\begin{proof}
Let $0<\eps<1$. Direct computations and the choice of $\lambda$ show that 
\begin{equation}\label{eq:inisup}
\begin{aligned}
	- \Delta u_{0,\eps}^m 
	&= c_1^m m\lambda \left( n - (2 + m\lambda) \frac{|x|^2}{|x|^2+\eps} \right) 
	(|x|^2+\eps)^{-\frac{1}{2}m\lambda -1} \\
	&\geq c_1^m m\lambda ( n - 2 - m\lambda)  
	(|x|^2+\eps)^{-\frac{1}{2}m\lambda -1} \geq 0. 
\end{aligned}
\end{equation}
Then, $u_{0,\eps}$ is a supersolution of \eqref{eq:fastap}. 
Since $u_\eps(\,\cdot\,, 0) = u_{0,\eps}$, 
the comparison principle for bounded solutions gives 
\[
	u_\eps(x,t) \leq u_{0,\eps}(x)
	= ( c_1^m (|x|^2+\eps)^{-\frac{1}{2}m \lambda} 
	+c_2^m +\eps )^\frac{1}{m}
	\leq (c_1^m |x|^{-m\lambda} +c_2^m +1)^\frac{1}{m}. 
\]
The lemma follows. 
\end{proof}

As a by-product of \eqref{eq:inisup}, 
we can see that $u_\eps$ is decreasing in $t$. 

\begin{lemma}\label{lem:timedec}
For each $0<\eps<1$, $\partial_t u_\eps \leq0$ in $\R^n\times[0,\infty)$. 
\end{lemma}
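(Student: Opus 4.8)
The goal is to show that the approximate solution $u_\eps$ is nonincreasing in time. The plan is to exploit the comparison principle together with the translation invariance of the equation in the time variable. First I would recall that from the identity \eqref{eq:inisup} established in the proof of Lemma \ref{lem:upap}, the initial datum $u_{0,\eps}$ is a supersolution of the equation, which in the autonomous setting morally says that $\partial_t u_\eps|_{t=0}\leq0$. The clean way to turn this into monotonicity for all $t$ is to compare $u_\eps(\,\cdot\,,t)$ with the time-shifted function.

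\begin{proof}
Fix $0<\eps<1$ and let $h>0$ be arbitrary.
Define $w(x,t):=u_\eps(x,t+h)$ for $x\in\R^n$ and $t\geq0$.
Since the equation $\partial_t u=\Delta u^m$ is autonomous,
$w$ is again a bounded solution of this equation on $\R^n\times(0,\infty)$,
satisfying the same bounds \eqref{eq:roughes}.
We compare the initial data of $w$ and $u_\eps$.
By \eqref{eq:inisup}, we have $-\Delta u_{0,\eps}^m\geq0$,
so that $u_{0,\eps}$ is a (stationary) supersolution of the equation;
consequently the solution $u_\eps$ starting from $u_{0,\eps}$
stays below this supersolution, and in particular
by the comparison principle applied on the time interval $[0,h]$ we obtain
\[
	w(x,0)=u_\eps(x,h)\leq u_{0,\eps}(x)=u_\eps(x,0),
	\qquad x\in\R^n.
\]
Thus $w(\,\cdot\,,0)\leq u_\eps(\,\cdot\,,0)$ on $\R^n$.
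Since both $w$ and $u_\eps$ are bounded solutions of the same equation
with ordered initial data, the comparison principle for bounded solutions
gives $w\leq u_\eps$ on $\R^n\times[0,\infty)$, that is,
\[
	u_\eps(x,t+h)\leq u_\eps(x,t),
	\qquad x\in\R^n,\; t\geq0.
\]
As $h>0$ was arbitrary, $u_\eps(x,\,\cdot\,)$ is nonincreasing on $[0,\infty)$
for each fixed $x$, and differentiating yields
$\partial_t u_\eps\leq0$ in $\R^n\times[0,\infty)$.
\end{proof}

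The main obstacle, and the reason I would be careful about the wording, is ensuring that the comparison principle is invoked only in the bounded-solution regime where it is valid: both $u_\eps$ and its time-translate $w$ satisfy the uniform bounds \eqref{eq:roughes}, and the strict positivity $u_\eps\geq\eps>0$ keeps the equation uniformly parabolic away from degeneracy, so the standard comparison principle for bounded classical solutions of the (nondegenerate) equation applies without difficulty. The only genuinely nontrivial input is the sign computation $-\Delta u_{0,\eps}^m\geq0$, which is exactly \eqref{eq:inisup} and relies on the constraint $\lambda<(n-2)/m$ from \eqref{eq:lamcon}; everything else is the translation trick. I would also note that this shift argument is cleaner than differentiating the equation in $t$ and running a maximum principle on $\partial_t u_\eps$, since it avoids any regularity discussion of $\partial_t u_\eps$ as a subsolution of the linearized equation.
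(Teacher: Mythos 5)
Your proof is correct, but it takes a genuinely different route from the paper. The paper differentiates the equation in $t$, sets $z:=\partial_t u_\eps$, notes $z(\,\cdot\,,0)\leq0$ from \eqref{eq:inisup}, writes out the linear parabolic equation satisfied by $z$, checks via \eqref{eq:roughes} and parabolic regularity that its coefficients are bounded and that the diffusion coefficient is bounded below, and then applies the comparison principle to $z$ itself (this is the argument of V\'azquez--Winkler that the paper cites). You instead use the time-shift trick: the bound $u_\eps(\,\cdot\,,h)\leq u_{0,\eps}=u_\eps(\,\cdot\,,0)$ is exactly what Lemma \ref{lem:upap} already provides, the equation is autonomous so $u_\eps(\,\cdot\,,\,\cdot\,+h)$ is again a bounded solution satisfying \eqref{eq:roughes}, and one application of the comparison principle for bounded solutions yields $u_\eps(\,\cdot\,,t+h)\leq u_\eps(\,\cdot\,,t)$ for all $t,h\geq0$. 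Both arguments ultimately rest on the same two inputs, the sign computation \eqref{eq:inisup} and the comparison principle in the uniformly parabolic regime guaranteed by \eqref{eq:roughes}; what your version buys is that it sidesteps the linearization entirely, so there is no need to justify the smoothness of $\partial_t u_\eps$ or the boundedness of the coefficients of the equation it satisfies, and it delivers monotonicity on all of $[0,\infty)$ directly (the derivative statement then follows wherever $u_\eps$ is differentiable in $t$, which is everywhere by the regularity already recorded). The paper's version, in exchange, produces the pointwise inequality $\partial_t u_\eps\leq0$ without passing through the monotone-difference formulation. Either is acceptable here.
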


\begin{proof}
This lemma can be proved by the same argument as in \cite[Lemma 3.4]{VW11}. 
We give an outline. 
Define $z:= \partial_t u_\eps$. 
Remark that 
$z$ is smooth by the parabolic regularity theory. 
Then, \eqref{eq:inisup} implies $z(\,\cdot\,,0)\leq 0$. 
We differentiate $\partial_t u_\eps=\Delta u_\eps^m$ 
with respect to $t$ and obtain 
\[
	z_t = \frac{m}{u_\eps^{1-m}} \Delta z 
	+ \frac{2 m (m-1) \nabla u_\eps}{u_\eps^{2-m}} \cdot \nabla z 
	+ \frac{ m  (m-1) (m-2) |\nabla u_\eps|^2  
	+  m (m-1) u_\eps \Delta u_\eps }{u_\eps^{3-m}} z. 
\]
By \eqref{eq:roughes} and the parabolic regularity theory, 
we see that the coefficients are bounded by a constant depending on $\eps$. 
In addition, $z$ is bounded. 
By \eqref{eq:roughes} again, 
the coefficient of $\Delta z$ is bounded below by a positive constant 
also depending on $\eps$. 
Hence, the comparison principle for bounded solutions 
yields $z=\partial_t u_\eps \leq0$. 
\end{proof}

We also prove the radial symmetry and monotonicity of $u_\eps$.

\begin{lemma}\label{lem:ueprad}
$u_\eps$ is radially symmetric and nonincreasing in $|x|$ 
for each $t\in [0,\infty)$. 
\end{lemma}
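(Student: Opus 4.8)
The plan is to prove the radial symmetry and radial monotonicity of $u_\eps$ by exploiting the symmetry of the initial data $u_{0,\eps}$ together with the uniqueness of bounded solutions to the approximate problem \eqref{eq:fastap}. First I would observe that the initial function $u_{0,\eps}$ depends on $x$ only through $|x|$, hence it is invariant under every orthogonal transformation $R\in O(n)$, that is, $u_{0,\eps}(Rx)=u_{0,\eps}(x)$. The equation $\partial_t u=\Delta u^m$ is itself invariant under such rotations, since the Laplacian commutes with orthogonal changes of variable. Therefore, given the unique bounded solution $u_\eps\in X_\infty$ satisfying \eqref{eq:roughes}, the function $u_\eps(Rx,t)$ is also a bounded solution of \eqref{eq:fastap} with the same initial data $u_{0,\eps}$. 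By the uniqueness of bounded solutions guaranteed in Subsection \ref{subsec:exsin}, we conclude $u_\eps(Rx,t)=u_\eps(x,t)$ for all $R\in O(n)$, which is exactly radial symmetry. From this I may write $u_\eps(x,t)=\phi(|x|,t)$ for some function $\phi$.

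For the monotonicity in $|x|$, the plan is to use the comparison principle in a way that respects the radial structure. I would note that $u_{0,\eps}$ is nonincreasing in $|x|$: a direct computation shows that $c_1^m(|x|^2+\eps)^{-\frac12 m\lambda}$ is strictly decreasing in $|x|$, so the monotonicity holds at the initial time. The natural approach is a sliding or reflection argument: for a fixed direction and two radii $r_1<r_2$, compare the solution against a suitably shifted or reflected copy of itself. Concretely, I would consider the moving plane method or, more simply, use that for radial solutions of the fast diffusion equation the spatial monotonicity of the initial data is preserved along the flow. An efficient route is to differentiate; setting $w:=\partial_r\phi$ (the radial derivative) and using the radial form of the equation, one obtains a linear parabolic equation for $w$ whose coefficients are bounded on $\R^n\times[0,T]$ by \eqref{eq:roughes} and parabolic regularity, with $w(\,\cdot\,,0)\le0$ from the initial monotonicity; the comparison principle then yields $w\le0$ for all $t$, giving the desired monotonicity. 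This mirrors the structure already used in the proof of Lemma \ref{lem:timedec}.

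The main obstacle I anticipate is the behavior of the radial derivative equation at the origin $r=0$, where the radial Laplacian $\partial_{rr}+\frac{n-1}{r}\partial_r$ has a singular coefficient and where $w=\partial_r\phi$ naturally vanishes by smoothness of $u_\eps$. One must ensure that the linear equation satisfied by $w$ is still amenable to the comparison principle despite this coordinate singularity, or instead argue the monotonicity intrinsically in $x$ without passing to the singular radial coordinate. To sidestep this difficulty, I would prefer the cleaner reflection-based comparison: given any unit vector $e$ and the reflection across a hyperplane not through the origin, radial symmetry together with the comparison principle applied on a half-space shows that values at larger radii cannot exceed values at smaller radii. Since the solution is bounded and $O(n)$-invariant, this reflection argument is robust and avoids the singular radial ODE altogether. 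Thus the proof reduces to two applications of uniqueness and comparison for bounded solutions of \eqref{eq:fastap}, both of which are available from the construction in this subsection.
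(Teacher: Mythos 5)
Your argument is essentially the paper's: radial symmetry follows from the rotational invariance of $\Delta$ and of $u_{0,\eps}$ together with uniqueness of the bounded solution, and monotonicity is obtained by differentiating the radial equation and applying the comparison principle to $\tilde z=\partial_r u_\eps$, which is exactly the outline the paper gives (citing V\'azquez's book, Subsection 5.7.1, for the details). The difficulty you flag at $r=0$ is genuine but is absorbed into that reference in the paper's proof; your Aleksandrov-reflection fallback is a standard and valid way to avoid it, though not the route taken here.
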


\begin{proof}
This lemma follows from the same argument as in 
\cite[Subsection 5.7.1]{Vabook}. We give an outline. 
Since $u_{0,\eps}$ is radially symmetric, 
the rotationally invariance of $\Delta$ 
and the uniqueness of a solution of \eqref{eq:fastap} 
implies that $u_\eps(x,t)$ is radially symmetric for each $t\in[0,\infty)$. 

We regard $u_\eps(x,t)$ as a function $u_\eps(r,t)$ ($r:=|x|$). 
Set $\tilde z:=\partial_r u_\eps$. Then, $\tilde z(r,0)\leq 0$ for $r\geq0$. 
We differentiate 
$\partial_t u_\eps=\partial_r^2 u_\eps^m+ (n-1)r^{-1}\partial_r u_\eps^m$ 
with respect to $r$ and obtain 
\[
	\tilde z_t=\partial_r^2 (m u_\eps^{m-1} \tilde z)
	+ \frac{n-1}{r} \partial_r (m u_\eps^{m-1} \tilde z)
	-\frac{n-1}{r^2} m u_\eps^{m-1} \tilde z. 
\]
By \eqref{eq:roughes} and the comparison principle for bounded solutions, 
we obtain $\tilde z =\partial_r u_\eps\leq 0$. 
\end{proof}

We construct a solution of \eqref{eq:fast}. 

\begin{lemma}\label{lem:ex}
There exists a nonnegative solution  
$u\in X_\infty$ of \eqref{eq:fast}. 
Moreover, $u$ satisfies (i) and (ii) of Theorem \ref{th:blowdown}. 
\end{lemma}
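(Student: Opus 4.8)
The plan is to obtain $u$ as a limit of the approximate solutions $u_\eps$ from \eqref{eq:fastap} as $\eps\to0$, using the uniform bounds already in place. First I would fix an arbitrary compact set $K=\{r_1\le|x|\le r_2\}\times[0,T]\subset(\R^n\setminus\{0\})\times[0,\infty)$ and derive two-sided bounds on $u_\eps$ over $K$ that are uniform in $\eps$. The upper bound is immediate from Lemma \ref{lem:upap}, giving $u_\eps\le(c_1^m r_1^{-m\lambda}+c_2^m+1)^{1/m}$ on $K$. For the lower bound I would exploit the monotonicities: since $u_\eps$ is nonincreasing in $|x|$ by Lemma \ref{lem:ueprad} and nonincreasing in $t$ by Lemma \ref{lem:timedec}, its infimum over $K$ is attained at $|x|=r_2$, $t=T$, so it suffices to bound $u_\eps(r_2 e_1,T)$ below uniformly in $\eps$. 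When $c_2>0$, comparison of $u_\eps$ with the stationary solution $c_2$ (note $u_{0,\eps}\ge c_2$) yields $u_\eps\ge c_2$ throughout, which already provides a positive lower bound; the general case $c_2\ge0$ would be handled by comparison with an explicit subsolution of the fast diffusion equation.

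These two-sided bounds make the operator $u\mapsto\Delta u^m$ uniformly parabolic on $K$ with constants depending only on $K$: there the diffusivity $m u_\eps^{m-1}$ lies in a fixed compact subinterval of $(0,\infty)$. I would then invoke interior parabolic Schauder estimates and, using that $u_{0,\eps}\to u_0$ in $C^\infty_\loc(\R^n\setminus\{0\})$, up-to-$t=0$ estimates to obtain uniform-in-$\eps$ $C^{2+\alpha,1+\alpha/2}$ bounds on compact subsets of $(\R^n\setminus\{0\})\times[0,\infty)$. By Arzel\`a--Ascoli and a diagonal argument there is a subsequence $u_{\eps_j}\to u$ in $C^{2,1}_\loc((\R^n\setminus\{0\})\times(0,\infty))$ and in $C_\loc((\R^n\setminus\{0\})\times[0,\infty))$. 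Passing to the limit in $\partial_t u_{\eps_j}=\Delta u_{\eps_j}^m$ shows that $u$ solves the equation in \eqref{eq:fast}, while the equicontinuity up to $t=0$ together with $u_{0,\eps_j}\to u_0$ locally uniformly gives $u(\,\cdot\,,0)=u_0$ on $\R^n\setminus\{0\}$. Hence $u$ is nonnegative and belongs to $X_\infty$.

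Properties (i) and (ii) then pass to the limit directly: each $u_{\eps_j}$ is radially symmetric and nonincreasing in $|x|$ by Lemma \ref{lem:ueprad} and nonincreasing in $t$ by Lemma \ref{lem:timedec}, and these are closed conditions under pointwise convergence, so $u$ inherits them.

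The main obstacle I anticipate is the uniform-in-$\eps$ positive lower bound on compact sets, that is, the nondegeneracy of the fast diffusion operator: the diffusivity $m u^{m-1}$ blows up where $u\to0$, so without such a bound the limiting equation could fail to be uniformly parabolic on $K$ and the Schauder estimates would be unavailable. For $c_2>0$ the constant barrier settles this cleanly, but for $c_2=0$ --- the supercritical range $0<m<(n-2)/n$ where solutions may vanish --- a genuine subsolution argument, of the kind developed in Lemma \ref{lem:loap}, is needed.
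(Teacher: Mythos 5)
Your construction is essentially the paper's: the same regularized problem \eqref{eq:fastap}, the uniform upper bound of Lemma \ref{lem:upap} together with \eqref{eq:roughes}, interior parabolic estimates plus a diagonal argument to extract a limit in $C_\loc((\R^n\setminus\{0\})\times[0,\infty))$ and $C^{2,1}_\loc((\R^n\setminus\{0\})\times(0,\infty))$, passage to the limit in the equation, and inheritance of (i) and (ii) from Lemmas \ref{lem:ueprad} and \ref{lem:timedec} as closed conditions. The one place you go beyond the paper is in insisting on a uniform-in-$\eps$ positive lower bound on compact sets before extracting the limit. For $c_2>0$ your constant barrier $u_\eps\geq c_2$ is correct (since $u_{0,\eps}\geq c_2$ and $c_2$ is a stationary solution) and is a clean way to make the Schauder step explicit; the paper instead simply invokes parabolic regularity for the bounded approximations and is content to produce a \emph{nonnegative} limit here, deferring positivity of $u$ to Lemma \ref{lem:loap} and the remark following Lemma \ref{lem:ex}.

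The genuine gap is in your treatment of $c_2=0$. You propose to bound $u_\eps$ below by ``an explicit subsolution of the kind developed in Lemma \ref{lem:loap},'' i.e.\ to compare $\underline{u}$ with $u_\eps$ directly. This cannot work as stated: for fixed $\eps>0$ the regularized initial datum $u_{0,\eps}$ is \emph{bounded} near $x=0$ (of size $\approx c_1\eps^{-\lambda/2}$), whereas $\underline{u}(\,\cdot\,,0)$ blows up like $c_1|x|^{-\lambda}$ there, so the required ordering $\underline{u}(\,\cdot\,,0)\leq u_{0,\eps}$ fails in a neighbourhood of the origin. One would have to truncate or otherwise $\eps$-modify the subsolution, at which point the uniformity of the resulting lower bound in $\eps$ is exactly what needs proving. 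This is why the paper orders the logic differently: Lemma \ref{lem:ex} produces only a nonnegative solution of \eqref{eq:fast}, and the lower bound $\underline{u}\leq u$ is obtained afterwards in Lemma \ref{lem:loap} by comparing the subsolution with a solution of the singular problem itself (via the sub-/supersolution sandwich and the uniqueness Theorem \ref{th:uni}), not with the approximations. If you keep your route, you should either restrict the lower-bound step to $c_2>0$ and argue the $c_2=0$ regularity by the boundedness of $u_\eps$ alone, or supply the $\eps$-uniform subsolution explicitly.
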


\begin{proof}
Lemma \ref{lem:upap} and \eqref{eq:roughes} give, for each $\eps>0$, 
\[
	0< u_\eps(x,t) \leq (c_1^m |x|^{-m\lambda} +c_2^m+1 )^\frac{1}{m}, 
	\qquad 
	x\in \R^n\setminus\{0\},\; t>0. 
\]
The parabolic regularity theory and 
the standard diagonalization argument show that 
$u_\eps$ converges along subsequence $u_{\eps_i}$ to a nonnegative function 
$u$ in $C_\loc((\R^n\setminus \{0\})\times[0,\infty))$ and in 
$C^{2,1}_\loc((\R^n\setminus \{0\})\times(0,\infty))$ as $i\to\infty$. 
Then, by \eqref{eq:fastap} with $\eps=\eps_i$ and letting $i\to\infty$, 
we see that the nonnegative function $u$ satisfies \eqref{eq:fast}. 
The properties (i) and (ii) follow from 
Lemmas \ref{lem:ueprad} and \ref{lem:timedec}, respectively. 
\end{proof}

We note that, eventually, the nonnegative solution constructed in Lemma \ref{lem:ex} 
is a unique positive solution of \eqref{eq:fast}, 
see  Lemma \ref{lem:loap} for positivity 
and Theorem \ref{th:uni} for uniqueness.

\subsection{Uniqueness}\label{subsec:comuni}
We prove Theorem \ref{th:uni}. 
We remark that the following proof does not valid for $m\geq (n-2)/n$, 
see \eqref{eq:asuse} and \eqref{eq:asuse2} below.

\begin{proof}[Proof of Theorem \ref{th:uni}]
This proof is based on the argument of \cite[Theorem 2.3]{HP85}. 
Let $0<t<T\leq \infty$ 
and let $u_1, u_2\in X_T$ satisfy 
$u_1(\,\cdot\,,0)=u_2(\,\cdot\,,0)$ on $\R^n \setminus \{0\}$ 
and \eqref{eq:fasteq}. 
In this proof, we write $u(t)=u(x,t)$ when no confusion can arise. 
Set $\sign(f):=f/|f|$ for $f\neq 0$ and 
$\sign(f):=0$ for $f=0$.

We first test the equation 
by a nonnegative function in $C^\infty_0(\R^n\setminus\{0\})$. 
Since the support of such a function 
does not contain the singular point $x=0$, 
the proof from here to \eqref{eq:uuode} is 
the same as \cite{HP85}, see \cite[Proof of Theorem 2.3]{HP85} for details. 
We recall Kato's inequality 
$\sign(f) \Delta f \leq \Delta |f|$ 
for any domain $D\subset \R^n$ and 
$f\in L^1_\loc(D)$ with $\Delta f\in L^1_\loc(D)$. 
We also recall that 
$\partial_t |g| = \sign(g) \partial_t g$ 
for any interval $I\subset \R$ and 
$g\in L^1_\loc(I)$ with $\partial_t g \in L^1_\loc(I)$. 
Then, we can see that 
\[
\begin{aligned}
	0&=\sign(u_1-u_2) \partial_t(u_1-u_2)
	- \sign(u_1-u_2) \Delta (u_1|u_1|^{m-1}-u_2|u_2|^{m-1})  \\
	&\geq 
	\partial_t |u_1-u_2| - \Delta \left| u_1|u_1|^{m-1}-u_2|u_2|^{m-1}\right|, 
\end{aligned}
\]
and that, for any nonnegative function 
$\psi_0\in C^\infty_0(\R^n\setminus\{0\})$, 
\begin{align}
	&\notag
	\frac{d}{dt} \int_{\R^n} \psi_0 |u_1-u_2| dx 
	\leq 
	\int_{\R^n} \Delta \psi_0 
	\left| u_1 |u_1|^{m-1} -u_2|u_2|^{m-1}\right| dx, \\
	&\label{eq:uuint}
	\int_{\R^n} \psi_0 |u_1(t)-u_2(t)| dx 
	\leq 
	\int_0^t \int_{\R^n} 
	\Delta \psi_0 
	\left| u_1 |u_1|^{m-1} -u_2|u_2|^{m-1}\right| dxd\tau. 
\end{align}
By the H\"older inequality and 
$|a|a|^{\tilde m-1}-b|b|^{\tilde m-1}|\leq 2^{1-\tilde m}|a-b|^{\tilde m}$ 
($a,b\in\R$, $0<\tilde m<1$), we have 
\[
\begin{aligned}
	&\int_{\R^n} \Delta \psi_0 
	\left| u_1 |u_1|^{m-1} -u_2|u_2|^{m-1}\right| dx \\
	&\leq 
	2^{1-m}\int_{\R^n} |\Delta \psi_0| |u_1-u_2|^m dx \\
	&\leq 
	(2C[\psi_0])^{1-m} 
	\left( \int_{\R^n} \psi_0 |u_1-u_2| dx  \right)^m, 
	\qquad C[\psi_0] := 
	\int_{\R^n} |\Delta \psi_0|^\frac{1}{1-m} \psi_0^{-\frac{m}{1-m}} dx. 
\end{aligned}
\]
Thus, 
\begin{equation}\label{eq:uuode}
	\int_{\R^n} \psi_0 |u_1(t)-u_2(t)| dx
	\leq 2(1-m)^\frac{1}{1-m} C[\psi_0] t^\frac{1}{1-m}. 
\end{equation}

We choose $\tilde \phi\in C^\infty (\R^n)$ 
such that $0\leq \tilde \phi\leq 1$, $\tilde \phi=0$ if $|x|\leq 1/2$ 
and $\tilde \phi=1$ if $|x|\geq 1$.
Define $\phi:=\tilde \phi^k$ with $k>2/(1-m)$. 
For $0<\eps<1$, set $\phi_\eps(x):=\phi(x/\eps)$. 
Let $\tilde \psi \in C^\infty_0(\R^n)$ be a nonnegative function. 
Set $\psi:=\tilde \psi^k$ and $\psi_\eps:=\psi \phi_\eps$. 
Remark that $\psi_\eps\in C^\infty_0(\R^n\setminus\{0\})$. 
In this setting, we see that 
\[
	C[\psi] =
	\int_{\R^n} 
	\left|k(k-1) \tilde \psi^{k(1-m)-2} |\nabla \tilde\psi|^2 
	+ k\tilde \psi^{k(1-m)-1} \Delta \tilde \psi\right|^\frac{1}{1-m} dx <\infty, 
	\qquad C[\phi]<\infty. 
\]
By setting 
$\tilde C[\phi] 
:= \int_{\R^n} |\nabla \phi|^{1/(1-m)} \phi^{-m/(1-m)} dx$, 
we have $\tilde C[\phi] <\infty$. 
From straightforward estimates and 
the change of variables $y=x/\eps$, it follows that 
\[
\begin{aligned}
	C[\psi_\eps] &= 
	\int_{\R^n} | \phi_\eps \Delta \psi
	+ 2\nabla \phi_\eps\cdot \nabla \psi
	+\psi \Delta \phi_\eps |^\frac{1}{1-m} 
	\psi^{-\frac{m}{1-m}} \phi_\eps^{-\frac{m}{1-m}} dx \\
	&\leq 
	3^\frac{1}{1-m} C[\psi] 
	+
	C \int_{\R^n} 
	| \nabla \phi_\eps
	\cdot k\tilde \psi^{k(1-m)-1}\nabla \tilde \psi |^\frac{1}{1-m} 
	\phi_\eps^{-\frac{m}{1-m}} dx  
	+ C \|\psi\|_{L^\infty(\R^n)}  C[\phi_\eps] \\ 
	&\leq 
	3^\frac{1}{1-m} C[\psi] 
	+C \tilde C[\phi_\eps] + C C[\phi_\eps] \\ 
	&=
	3^\frac{1}{1-m} C[\psi] 
	+C \eps^{n-\frac{1}{1-m}} \tilde C[\phi] 
	+ C\eps^{n-\frac{2}{1-m}} C[\phi]. 
\end{aligned}
\]
Since $C[\phi],\tilde C[\phi]<\infty$ and $n-2/(1-m)>0$, 
we obtain
\begin{equation}\label{eq:asuse}
	\liminf_{\eps\to0} C[\psi_\eps] \leq 
	3^\frac{1}{1-m} C[\psi] 
	+C \liminf_{\eps\to0}( \eps^{n-\frac{1}{1-m}} 
	+ \eps^{n-\frac{2}{1-m}} )
	= 3^\frac{1}{1-m} C[\psi]. 
\end{equation}
Then, by substituting $\psi_0=\psi_\eps$ into \eqref{eq:uuode} and 
applying Fatou's lemma, we obtain 
\begin{equation}\label{eq:ulocint}
	\int_{\R^n} \psi |u_1(t)-u_2(t)| dx 
	\leq 2(3(1-m))^\frac{1}{1-m} C[\psi] t^\frac{1}{1-m}, 
	\qquad C[\psi]<\infty 
\end{equation}
for any 
nonnegative function $\psi \in C^\infty_0(\R^n)$ 
that can be written in the form $\psi=\tilde \psi^k$ 
with $\tilde \psi\in C^\infty_0(\R^n)$ and $k>2/(1-m)$. 
In particular, $u_1-u_2\in L^1(K\times(0,t))$ 
for any compact subset $K$ of $\R^n$. 
This together with $|a|a|^{m-1}-b|b|^{m-1}|\leq |a-b|^m$  
and the H\"older inequality also shows that 
$|u_1-u_2|^m \in L^1(K\times(0,t))$ 
and $u_1 |u_1|^{m-1} -u_2|u_2|^{m-1} \in 
L^1(K\times(0,t))$.

Let $\varphi\in C^\infty_0(\R^n)$ be a nonnegative function. 
By substituting $\psi_0=\varphi \phi_\eps$ into \eqref{eq:uuint} 
and applying Fatou's lemma, we have 
\[
	\int_{\R^n} \varphi |u_1(t)-u_2(t)| dx 
	\leq \liminf_{\eps\to0} (I_{1,\eps}+I_{2,\eps}), 
\]
where 
\[
\begin{aligned}
	& I_{1,\eps}:= 
	\int_0^t \int_{\R^n} \phi_\eps \Delta \varphi
	\left|u_1|u_1|^{m-1}- u_2|u_2|^{m-1}\right| dx d\tau, \\
	& I_{2,\eps}:= 
	\int_0^t \int_{\R^n} (2\nabla \varphi\cdot \nabla \phi_\eps 
	+ \varphi \Delta \phi_\eps) 
	\left|u_1|u_1|^{m-1}- u_2|u_2|^{m-1}\right| dx d\tau. 
\end{aligned}
\]
Since $u_1 |u_1|^{m-1} -u_2|u_2|^{m-1} \in 
L^1(K\times(0,t))$ for $K$ compact, 
Lebesgue's dominated convergence theorem gives 
$I_{1,\eps}\to 
\int_0^t \int_{\R^n} \Delta \varphi 
|u_1 |u_1|^{m-1} -u_2|u_2|^{m-1}| dx d\tau$ as $\eps\to0$. 
By $|a|a|^{m-1}-b|b|^{m-1}|\leq |a-b|^m$ 
and the H\"older inequality, we have 
\[
\begin{aligned}
		|I_{2,\eps}| &\leq 
	\int_0^t \int_{\R^n} 
	( 2|\nabla \varphi|  |\nabla \phi_\eps| + \varphi |\Delta \phi_\eps| )
	|u_1-u_2|^m dx d\tau \\
	&\leq 
	\left( \int_0^t \int_{\R^n} 
	 2^\frac{1}{m} |\nabla \varphi|^\frac{1}{m} 
	|u_1-u_2|  dx d\tau \right)^m 
	\left( \int_0^t \int_{B(0;\eps)} |\nabla \phi_\eps|^\frac{1}{1-m} dxd\tau 
	\right)^{1-m} \\
	&\quad + 
	\left( \int_0^t \int_{\R^n} 
	\varphi^\frac{1}{m} |u_1-u_2|  dx d\tau \right)^m 
	\left( \int_0^t \int_{B(0;\eps)} |\Delta \phi_\eps|^\frac{1}{1-m} dxd\tau 
	\right)^{1-m}. 
\end{aligned}
\]
From $|u_1-u_2| \in L^1(K\times(0,t))$ for $K$ compact, 
the change of variables $y=x/\eps$ 
and $n-2/(1-m)>0$, 
it follows that 
\begin{equation}\label{eq:asuse2}
	|I_{2,\eps}| \leq 
	C ( \eps^{(n-\frac{1}{1-m})(1-m)} 
	+ \eps^{(n-\frac{2}{1-m})(1-m)} ) 
	\to 0 \qquad \mbox{ as }\eps\to0. 
\end{equation}
Therefore, 
for any nonnegative function $\varphi \in C^\infty_0(\R^n)$, 
we obtain 
\[
	\int_{\R^n} \varphi |u_1(t)-u_2(t)| dx 
	\leq 
	\int_0^t \int_{\R^n} \Delta \varphi 
	\left|u_1 |u_1|^{m-1} -u_2|u_2|^{m-1}\right| dx d\tau. 
\]

The rest of the proof is the same as 
the latter part of \cite[Theorem 2.3]{HP85}. 
Set 
\[
	w(x,t):=\int_0^t \left|u_1(x,\tau) |u_1(x,\tau)|^{m-1} 
	- u_2(x,\tau)|u_2(x,\tau)|^{m-1}\right| d\tau. 
\]
By $\Delta \varphi |u_1 |u_1|^{m-1} -u_2|u_2|^{m-1}| 
\in L^1(\R^n\times(0,t))$, 
Fubini's theorem gives 
\[
	\int_{\R^n} \varphi |u_1(t)-u_2(t)| dx 
	\leq 
	\int_{\R^n} w(x,t) \Delta \varphi(x)  dx. 
\]
Then, $\int_{\R^n} (\Delta \varphi) w  dx\geq0$, 
and so $-\Delta w\leq 0$ in $\cD'(\R^n)$. 
Hence, the following mean value inequality for subharmonic functions holds 
\[
	w(\xi,t) \leq \frac{1}{\omega_n R^n}\int_{B(\xi;R)} w(x,t) dx
	:= I_R 
\]
for $\xi\in \R^n$ and $R>0$, 
where $\omega_n$ is the volume of a unit ball. 
Thus, $u_1\equiv u_2$ will be proved once 
we prove $I_R\to0$ as $R\to\infty$.

We choose $\tilde \psi_1\in C^\infty_0(\R^n)$ such that 
$0\leq \tilde \psi_1\leq 1$, $\tilde \psi_1=0$ if $|x-\xi|\geq 2$ 
and $\tilde \psi_1=1$ if $|x-\xi|\leq 1$. 
Let $\psi_1:=\tilde \psi_1^k$ with $k>2/(1-m)$. 
For $R>1$, set $\psi_R(x):=\psi_1(x/R)$. 
Then, by the H\"older inequality and \eqref{eq:ulocint} with $\psi=\psi_R$, 
we have 
\[
\begin{aligned}
	I_R &\leq 
	\frac{2^{1-m}}{\omega_n R^n}  \int_0^t
	\int_{B(\xi;R)} |u_1-u_2|^m dx d\tau \\
	&\leq 
	C R^{-n+n(1-m)} \int_0^t
	\left( \int_{B(\xi;R)} |u_1-u_2| dx \right)^m  d\tau \\
	&\leq 
	C R^{-n+n(1-m)} \int_0^t
	\left( \int_{\R^n} \psi_R |u_1-u_2| dx \right)^m  d\tau 
	\leq 
	C R^{-nm} C[\psi_R]^m t^\frac{1}{1-m}. 
\end{aligned}
\]
Since $C[\psi_1]<\infty$ and 
\[
	C[\psi_R]
	= 
	\int_{B(\xi;2R)} R^{-\frac{2}{1-m}} 
	\left|(\Delta \psi_1) \left( \frac{x}{R} \right) \right|^\frac{1}{1-m} 
	\psi_1\left( \frac{x}{R} \right)^{-\frac{m}{1-m}} dx 
	= 
	C[\psi_1] R^{n-\frac{2}{1-m}}, 
\]
we obtain 
\[
	I_R 
	\leq 
	C R^{-\frac{2m}{1-m}} t^\frac{1}{1-m} \to 0 
	\qquad \mbox{ as }R\to\infty. 
\]
As stated before, this shows $u_1\equiv u_2$. 
The proof is complete. 
\end{proof}

The uniqueness part in Theorem \ref{th:blowdown} 
immediately follows from Theorem \ref{th:uni}.

\subsection{Estimates of the solution}\label{subsec:estisol}
To estimate the unique solution of \eqref{eq:fast}, 
we construct a classical supersolution $\overline{u}$ 
and a continuous weak subsolution $\underline{u}$. 
Here we call $\overline{u}>0$ a classical supersolution if 
$\overline{u}$ belongs to $X_\infty$ 
and satisfies $\overline{u}(\,\cdot\,,0)\geq u_0$ and 
\[
	\partial_t \overline{u} \leq \Delta \overline{u}^m 
	\qquad \mbox{ in }(\R^n\setminus\{0\})\times (0,\infty). 
\]
We call $\underline{u}>0$ 
a continuous weak subsolution 
if $\underline{u}$ belongs to $C((\R^n\setminus\{0\})\times[0,\infty))$ 
and satisfies $\underline{u}(\,\cdot\,,0)\leq u_0$ and 
\begin{equation}\label{eq:cwsub}
	\int_{\R^n}\underline{u}(x,t) \varphi(x,t) dx - 
	\int_{\R^n}u_0(x) \varphi(x,0) dx 
	\leq \int_0^t \int_{\R^n} 
	(\underline{u} \varphi_t + \underline{u}^m \Delta \varphi) dxd\tau 
\end{equation}
for any $0<t<\infty$, 
where $\varphi\in C^\infty((\R^n\setminus\{0\})\times[0,\infty))$ 
is a nonnegative function such that 
$\supp \varphi(\,\cdot\,,t)$ is a compact subset of 
$\R^n\setminus \{0\}$ for any $t\in [0,\infty)$. 
We note that the existence of $\overline{u}$ and $\underline{u}$ 
guarantees the existence of a positive solution 
$\tilde u$ of \eqref{eq:fast} 
satisfying $\underline{u}\leq \tilde u \leq \overline{u}$ 
in $(\R^n\setminus\{0\})\times [0,\infty)$. 
Then, by the uniqueness of a solution of \eqref{eq:fast}, 
the unique solution $u$ of \eqref{eq:fast} is also estimated as  
$\underline{u}\leq u \leq \overline{u}$ 
in $(\R^n\setminus\{0\})\times [0,\infty)$.

We first give a supersolution $\overline{u}$. 
Set 
\[
	\overline{u}(x,t):=u_{0}(x)= (c_1^m |x|^{-m\lambda} +c_2^m )^\frac{1}{m}. 
\]

\begin{lemma}\label{lem:upsol}
$\overline{u}$ is a classical supersolution 
in $(\R^n\setminus\{0\})\times [0,\infty)$. 
\end{lemma}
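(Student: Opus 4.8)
The plan is to check the three requirements in the definition of a classical supersolution directly, and to observe that the only one with content is the differential inequality. Since $\overline{u}(x,t)=u_0(x)$ is independent of $t$, lies in $C^\infty(\R^n\setminus\{0\})$, and is strictly positive (because $c_1>0$), we immediately get $\overline{u}\in X_\infty$, $\partial_t\overline{u}\equiv0$, and $\overline{u}(\,\cdot\,,0)=u_0\geq u_0$. Thus membership in $X_\infty$ and the initial inequality are free, and the supersolution inequality reduces, via $\partial_t\overline{u}=0$, to the pointwise bound $-\Delta\overline{u}^m\geq0$ on $\R^n\setminus\{0\}$ — precisely the $\eps=0$ analogue of \eqref{eq:inisup} from the proof of Lemma \ref{lem:upap}.

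So the single computation I would carry out is that of $\Delta\overline{u}^m$. Writing $\overline{u}^m=c_1^m|x|^{-m\lambda}+c_2^m$, the constant term is harmonic and drops out, so it suffices to differentiate the radial power $|x|^{-m\lambda}$. Using $\Delta f=f''+(n-1)r^{-1}f'$ with $f(r)=r^{-\alpha}$ gives the standard identity $\Delta(|x|^{-\alpha})=\alpha(\alpha+2-n)|x|^{-\alpha-2}$; taking $\alpha=m\lambda$ yields
\[
	\Delta\overline{u}^m = c_1^m\,m\lambda\,(m\lambda+2-n)\,|x|^{-m\lambda-2}.
\]
The sign is then read off from the standing hypothesis \eqref{eq:lamcon}: since $\lambda<(n-2)/m$ we have $m\lambda+2-n<0$, while $m\lambda>0$ and $|x|^{-m\lambda-2}>0$, so $-\Delta\overline{u}^m>0$, equivalently $\partial_t\overline{u}=0\geq\Delta\overline{u}^m$, which is exactly the supersolution inequality.

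There is essentially no obstacle here beyond bookkeeping: the entire argument is the sign of the single factor $m\lambda+2-n$. The only points worth flagging are that it is the \emph{upper} constraint $\lambda<(n-2)/m$ (and not the lower one $\lambda>2/(1-m)$) that is used here, and that the computation is valid only away from $x=0$, which is why the statement is phrased on $\R^n\setminus\{0\}$ and the singularity at the origin is excluded from the verification.
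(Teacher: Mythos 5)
Your proof is correct and is essentially the paper's own argument: the paper likewise notes $\partial_t\overline{u}=0$, computes $\partial_t\overline{u}-\Delta\overline{u}^m=c_1^m m\lambda(n-2-m\lambda)|x|^{-m\lambda-2}\geq0$ as the $\eps=0$ analogue of \eqref{eq:inisup}, and concludes from $\overline{u}(\,\cdot\,,0)=u_0$. Your explicit identification of the upper constraint $\lambda<(n-2)/m$ as the one doing the work is a correct reading of the same computation.
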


\begin{proof}
By similar computations to \eqref{eq:inisup}, we have 
\[
	\partial_t \overline{u} - \Delta \overline{u}^m 
	= 0+c_1^m m\lambda ( n - 2 - m\lambda ) 
	|x|^{-m\lambda -2} \geq 0. 
\]
Since $\overline{u}(\,\cdot\,,0)= u_0$, the lemma follows. 
\end{proof}

Let us next construct subsolutions. 
We prepare auxiliary constants and functions as follows. 
Let $\nu,\mu,\delta>0$ satisfy 
\[
	\lambda-\frac{2}{m}\left( 
	\frac{\lambda}{2}(1-m)-1 \right) 
	<\nu<\lambda<\frac{n-2}{m}<\mu<\frac{2n-2}{m}, \qquad 
	\frac{\mu-\lambda}{\mu-\nu} <\delta <1. 
\]
For $t\geq0$, set 
\[
\begin{aligned}
	&a(t):=A e^t, 
	&&\rho(t)
	:= \delta^\frac{1}{m(\lambda-\nu)} a(t)^{-\frac{1}{m(\lambda-\nu)}}, \\
	&b(t):= (1-\delta) \rho(t)^{m(\mu-\lambda)}, 
	&&\sigma(t):= (c_1/c_2)^\frac{1}{\mu} b(t)^\frac{1}{m\mu}, 
\end{aligned}
\]
where $\sigma$ is defined for $c_2>0$ and $A>1$ is a constant satisfying 
\begin{align}
	&\label{eq:Alarge1}
	(c_1/c_2)^\frac{1}{\mu}
	(1-\delta)^\frac{1}{m\mu} 
	\delta^{ \frac{1}{m(\lambda-\nu)} \frac{\mu-\lambda}{\mu}} 
	A^{\frac{1}{m(\lambda-\nu)} \frac{\lambda}{\mu} } 
	- \delta^\frac{1}{m(\lambda-\nu)}  >0 \qquad \mbox{ if }c_2>0, \\
	& \label{eq:Alarge2}
	c_1^{1-m} A (1-\delta)^{\frac{1}{m}-1} > nm^2 \lambda. 
\end{align}
Remark that $a(t)>1$, $\rho(t)<1$ and $b(t)<1$ for $t\geq0$ 
and also remark that $a(t)\uparrow\infty$, 
$\rho(t)\downarrow 0$ and $b(t)\downarrow 0$ 
as $t\uparrow \infty$. 

For $r>0$, define 
\[
\left\{ 
\begin{aligned}
	&w_{\inn}(r,t):= 
	c_1 \left[ r^{-m\lambda} 
	- a(t) r^{-m\nu} \right]_+^\frac{1}{m}, \\
	&w_{\out}(r,t):= 
	c_1 b(t)^\frac{1}{m} r^{-\mu}, 
\end{aligned}
\right.
\]
where $[\,\cdot\,]_+$ is the positive part. 
We set 
\[
\begin{aligned}
	&\mbox{for }c_2>0, \qquad 
	\underline{u}(x,t):=
	\left\{
	\begin{aligned}
	&w_{\inn}(|x|,t) &&\mbox{ for } 0<|x|\leq \rho(t), \\
	&w_{\out}(|x|,t) &&\mbox{ for } \rho(t)< |x| \leq \sigma(t), \\
	&c_2 &&\mbox{ for } |x| > 
	\sigma(t), 
	\end{aligned}
	\right. \\
	&\mbox{for }c_2=0, \qquad 
	\underline{u}(x,t):=
	\left\{
	\begin{aligned}
	&w_{\inn}(|x|,t) &&\mbox{ for } 0<|x|\leq \rho(t), \\
	&w_{\out}(|x|,t) &&\mbox{ for } |x|> \rho(t). 
	\end{aligned}
	\right.
\end{aligned}
\]
We can see that $w_{\inn}>0$ for $r\leq \rho(t)$ 
and $w_{\inn}=w_{\out}$ for $r=\rho(t)$. 
In the case $c_2>0$, we also have 
$w_{\out}=c_2$ for $r=\sigma(t)$. 
Then, $\underline{u}>0$ and 
$\underline{u}\in C((\R^n\setminus\{0\})\times[0,\infty))$. 
Moreover, $\rho(t)< \sigma(t)$ for $t\geq0$ 
if $A$ satisfies \eqref{eq:Alarge1}. 
Indeed, 
\[
\begin{aligned}
	&\sigma(t) - \rho(t) \\
	&= (c_1/c_2)^\frac{1}{\mu} (1-\delta)^\frac{1}{m\mu} 
	\delta^{ \frac{1}{m(\lambda-\nu)} \frac{\mu-\lambda}{\mu}} 
	a(t)^{-\frac{1}{m(\lambda-\nu)} \frac{\mu-\lambda}{\mu} } 
	- \delta^\frac{1}{m(\lambda-\nu)} a(t)^{-\frac{1}{m(\lambda-\nu)}} \\
	&= 
	a(t)^{-\frac{1}{m(\lambda-\nu)}} 
	( (c_1/c_2)^\frac{1}{\mu}
	(1-\delta)^\frac{1}{m\mu} 
	\delta^{ \frac{1}{m(\lambda-\nu)} \frac{\mu-\lambda}{\mu}} 
	a(t)^{\frac{1}{m(\lambda-\nu)} \frac{\lambda}{\mu} } 
	- \delta^\frac{1}{m(\lambda-\nu)}  
	) \\
	&\geq 
	a(t)^{-\frac{2}{m(\lambda-\nu)}} 
	( 
	(c_1/c_2)^\frac{1}{\mu}
	(1-\delta)^\frac{1}{m\mu} 
	\delta^{ \frac{1}{m(\lambda-\nu)} \frac{\mu-\lambda}{\mu}} 
	A^{\frac{1}{m(\lambda-\nu)} \frac{\lambda}{\mu} } 
	- \delta^\frac{1}{m(\lambda-\nu)}  
	) > 0. 
\end{aligned}
\]
Note that \eqref{eq:Alarge2} will be used for proving that 
$\underline{u}$ is a subsolution for $0< |x| < \rho(t)$.

\begin{lemma}\label{lem:loap}
$\underline{u}$ is a continuous weak subsolution in 
$(\R^n\setminus\{0\})\times [0,\infty)$. 
\end{lemma}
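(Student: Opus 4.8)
The plan is to verify that $\underline{u}$ is a continuous weak subsolution by checking the differential subsolution inequality $\partial_t \underline{u} \le \Delta \underline{u}^m$ separately on each of the three regions where $\underline{u}$ is given by a smooth formula, and then to argue that the gluing along the moving interfaces $|x|=\rho(t)$ and $|x|=\sigma(t)$ does not destroy the weak subsolution property. The key structural fact that makes the interface gluing harmless is the following: since $\underline{u}$ is continuous and the pieces are ordered so that crossing from the inner to the outer region the function goes from a steeper profile to a gentler one, the quantity $\underline{u}^m$ should have a corner whose one-sided radial derivatives jump in the direction $\partial_r(\underline{u}^m)|_{\text{outer}} \ge \partial_r(\underline{u}^m)|_{\text{inner}}$; such a corner contributes a nonnegative (measure) term to $\Delta(\underline{u}^m)$ in the distributional sense, which only helps the subsolution inequality. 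Concretely, I would state and use the standard fact that a continuous function built by gluing classical subsolutions along hypersurfaces is a weak subsolution provided the normal derivative of $\underline{u}^m$ has the correct jump sign at each interface.

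First I would handle the outer region. For $|x|>\sigma(t)$ (when $c_2>0$) the function is the constant $c_2$, so $\partial_t \underline{u}=0=\Delta \underline{u}^m$ trivially. In the middle annulus $\rho(t)<|x|<\sigma(t)$ we have $w_{\out}=c_1 b(t)^{1/m} r^{-\mu}$, so $w_{\out}^m = c_1^m b(t) r^{-m\mu}$ is (up to the time factor $b(t)$) a radial power; the constraint $(n-2)/m < \mu < (2n-2)/m$ is exactly what makes $\Delta(r^{-m\mu})$ have the right sign, while $b(t)\downarrow 0$ forces $\partial_t w_{\out}\le 0$, and I would check that the loss from $\partial_t$ is dominated by the diffusion term. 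Second, for the inner region $0<|x|<\rho(t)$ I would substitute $w_{\inn}=c_1[r^{-m\lambda}-a(t)r^{-m\nu}]_+^{1/m}$ into $\partial_t \underline{u}-\Delta \underline{u}^m$; here $w_{\inn}^m = c_1^m(r^{-m\lambda}-a(t)r^{-m\nu})$ is a clean difference of two powers, so both terms are explicitly computable. The $\partial_t$ term produces $-c_1^m a'(t)r^{-m\nu}=-c_1^m a(t) r^{-m\nu}$, and the lower bound \eqref{eq:Alarge2} on $A$ (hence on $a(t)$) is precisely designed to absorb the bad contributions and force the inequality; the condition $\nu > \lambda - \tfrac{2}{m}(\tfrac{\lambda}{2}(1-m)-1)$ is what controls the interplay of the exponents. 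I would carry out this computation carefully since it is where all the constraints on $\nu,\mu,\delta,A$ get used, but I would not belabor the algebra.

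The main obstacle I expect is the interface argument, specifically verifying the correct jump sign of $\partial_r(\underline{u}^m)$ across $|x|=\rho(t)$ and $|x|=\sigma(t)$ and then packaging it into the weak formulation \eqref{eq:cwsub}. At $r=\sigma(t)$ the outer piece meets the constant $c_2$; since $w_{\out}$ is decreasing in $r$ its inward radial derivative is negative while the constant has zero derivative, giving a downward corner of $\underline{u}$, hence an upward corner of the concave-ish profile whose distributional Laplacian picks up a nonnegative surface measure — the favorable direction. At $r=\rho(t)$ I would compare the radial derivatives of $w_{\inn}^m$ and $w_{\out}^m$ (both explicit) and confirm the analogous sign. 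To make this rigorous within the weak formulation, rather than invoking a jump lemma abstractly, I would test against an arbitrary admissible nonnegative $\varphi$, integrate $\int \underline{u}^m \Delta\varphi$ by parts on each region, and collect the boundary integrals over the moving spheres; after accounting for the time-derivative of the regions (so that the interface terms from $\int \underline{u}\varphi_t$ combine with the geometric motion of $\rho(t),\sigma(t)$), the leftover interface contributions should be exactly the nonnegative jump terms, yielding \eqref{eq:cwsub}. The bookkeeping of these moving-boundary terms, together with ensuring the pointwise interior inequalities survive the chosen constants, is the technically delicate core of the proof.
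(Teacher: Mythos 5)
Your proposal follows essentially the same route as the paper's proof: verify the pointwise inequality $\partial_t\underline{u}\le\Delta\underline{u}^m$ separately on the three regions (using \eqref{eq:Alarge2} and the exponent constraints in the inner region, $b'\le 0$ and $(n-2)/m<\mu<(2n-2)/m$ in the middle annulus, and triviality outside), check the one-sided matching conditions $\partial_r w_{\inn}^m<\partial_r w_{\out}^m$ at $r=\rho(t)$ and $\partial_r w_{\out}^m<0$ at $r=\sigma(t)$, and then derive \eqref{eq:cwsub} by integrating by parts region by region against an admissible test function, the moving-interface contributions being harmless precisely because $\underline{u}$ is continuous across $|x|=\rho(t)$ and $|x|=\sigma(t)$. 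The only item you omit is the verification of the initial comparison $\underline{u}(\,\cdot\,,0)\le u_0$ on $\R^n\setminus\{0\}$, which is part of the definition of a continuous weak subsolution and enters \eqref{eq:cwsub} through the term $\int_{\R^n}u_0(x)\varphi(x,0)\,dx$; it is a short computation (on the middle annulus one uses $|x|\ge\rho(0)$, $\mu>\lambda$ and $(1-\delta)^{1/m}<1$ to get $u_0-\underline{u}(\,\cdot\,,0)\ge c_1|x|^{-\mu}\rho(0)^{\mu-\lambda}\bigl(1-(1-\delta)^{1/m}\bigr)\ge0$), but it must be included for the argument to be complete.
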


\begin{proof}
We only prove the case $c_2>0$. 
The proof for $c_2=0$ is the same. 
We first prove that 
$\partial_t \underline{u}\leq \Delta \underline{u}^m$ except for 
$|x|= \rho(t)$ or $|x|=\sigma(t)$. 
For $0< |x| < \rho(t)$ and $t>0$, 
\[
\begin{aligned}
	\partial_t \underline{u} - \Delta \underline{u}^m 
	&= -\frac{c_1}{m} a'(t) 
	( |x|^{-m\lambda} - a(t) |x|^{-m\nu} )^{\frac{1}{m}-1} |x|^{-m\nu}  \\
	&\quad 
	-c_1^m m\lambda(m\lambda+2-n) |x|^{-m\lambda -2 } 
	+c_1^m m\nu(m\nu+2-n) a(t) |x|^{-m\nu-2}. 
\end{aligned}
\]
By $|x| < \rho(t)$, we have 
$|x|^{-m\lambda} - a(t) |x|^{-m\nu} \geq (1-\delta)|x|^{-m\lambda}$. 
Then, by $a'(t)\geq A$, $m\nu+2-n<0$, $\lambda(1-m)-2 - m(\lambda-\nu)>0$, 
$|x| < \rho(t)<1$ and \eqref{eq:Alarge2}, we obtain
\[
\begin{aligned}
	\partial_t \underline{u} - \Delta \underline{u}^m 
	&\leq - \frac{c_1 A}{m} 
	(1-\delta)^{\frac{1}{m}-1}
	|x|^{ - \lambda(1-m) - m\nu } 
	+c_1^m nm\lambda |x|^{-m\lambda-2} \\
	&= m^{-1} c_1^m |x|^{-\lambda(1-m)-m\nu} 
	( - c_1^{1-m}A 
	(1-\delta)^{\frac{1}{m}-1}
	+nm^2 \lambda |x|^{\lambda(1-m)- 2 - m(\lambda-\nu)} ) \\
	&\leq m^{-1} c_1^m |x|^{-\lambda(1-m)-m\nu} 
	( nm^2 \lambda - c_1^{1-m} A (1-\delta)^{\frac{1}{m}-1}  )
	\leq 0. 
	\end{aligned}
\]
For $\rho(t)<|x|< \sigma(t)$ and $t>0$, 
by $b'(t)\leq 0$ and the choice of $\mu$, 
\[
	\partial_t \underline{u} - \Delta \underline{u}^m 
	= \frac{c_1}{m} b(t)^{\frac{1}{m}-1} b'(t) |x|^{-\mu} 
	- c_1^mm\mu ( m\mu +2-n) b(t) |x|^{-m\mu -2}\leq 0. 
\]
For $|x|>\sigma(t)$ and $t>0$, we have 
$\partial_t \underline{u} - \Delta \underline{u}^m = 0\leq 0$.

We next check the following appropriate matching conditions 
\[
\left\{ 
\begin{aligned}
	&\partial_r w_{\inn}^m < \partial_r w_{\out}^m 
	&&\mbox{ if }r=\rho(t), \\
	&\partial_r w_{\out}^m < \partial_r c_2^m 
	&&\mbox{ if }r=\sigma(t), 
\end{aligned}
\right.
\]
for $t\geq0$, where $r=|x|$. 
For $r=\rho(t)$, 
\[
\begin{aligned}
	\partial_r w_{\out}^m - \partial_r w_{\inn}^m 
	&= 
	c_1^m m ( -\mu b(t) \rho(t)^{-m\mu-1} 
	+\lambda \rho(t)^{-m\lambda -1}
	-\nu a(t) \rho(t)^{-m\nu -1} ) \\
	&= 
	c_1^m m \rho(t)^{-m\lambda-1} (\mu-\nu)
	\left( \delta - \frac{\mu-\lambda}{\mu-\nu} \right) >0
\end{aligned}
\]
by the choice of $\delta$. The case $r=\sigma(t)$ is clear.

Let $\varphi\in C^\infty((\R^n\setminus\{0\})\times[0,\infty))$ 
be a nonnegative function such that 
$\supp \varphi(\,\cdot\,,t)$ is a compact subset of 
$\R^n\setminus \{0\}$ for any $t\in [0,\infty)$. 
From integration by parts, it follows that 
\[
\begin{aligned}
	\int_0^t \int_{\R^n} 
	(\underline{u} \partial_t \varphi +\underline{u}^m \Delta \varphi) dxd\tau 
	&=
	\int_{\R^n} \underline{u}(x,t)\varphi(x,t) dx 
	- \int_{\R^n} \underline{u}(x,0)\varphi(x,0) dx \\
	&\quad 
	- \int_0^t \int_{
	\{|x|<\rho\} \cup \{\rho<|x|<\sigma\} 
	\cup \{|x|>\sigma\}  } 
	(\partial_t \underline{u}-\Delta \underline{u}^m) \varphi dxd\tau \\
	&\quad 
	- \int_0^t \int_{\{|x|= \rho\}} 
	( (\partial_r w_\inn^m)(|x|,\tau) 
	- (\partial_r w_\out^m)(|x|,\tau) ) \varphi dS d\tau \\
	&\quad 
	-\int_0^t \int_{\{|x|= \sigma\}} 
	( (\partial_r w_\out^m)(|x|,\tau) - \partial_r c_2^m ) 
	\varphi dS d\tau 
\end{aligned}
\]
for any $0<t<\infty$. 
Hence, by 
$\partial_t \underline{u}\leq \Delta \underline{u}^m$ except for 
$|x|= \rho(t)$ or $|x|=\sigma(t)$ 
and by the matching condition, 
$\underline{u}$ satisfies \eqref{eq:cwsub} for any $0<t<\infty$.

Finally, we check $\underline{u}(\,\cdot\,,0)\leq u_0$ on $\R^n\setminus\{0\}$. 
For $0< |x| \leq \rho(0)$, 
\[
\begin{aligned}
	\underline{u}(x,0)  
	= c_1 ( |x|^{-m\lambda} - a(0) |x|^{-m\nu} )^\frac{1}{m} 
	\leq 
	( c_1^m |x|^{-m \lambda} + c_2^m )^\frac{1}{m}
	=u_0(x). 
\end{aligned}
\]
For $\rho(0)< |x| \leq \sigma(0)$, 
\[
\begin{aligned}
	u_0(x) - \underline{u}(x,0) 
	&= 
	( c_1^m |x|^{-m \lambda} + c_2^m )^\frac{1}{m}
	- c_1 (1-\delta)^\frac{1}{m} \rho(0)^{\mu-\lambda} 
	|x|^{-\mu} \\
	&\geq 
	c_1 |x|^{-\mu}
	(|x|^{\mu-\lambda} - (1-\delta)^\frac{1}{m} \rho(0)^{\mu-\lambda} ) \\
	&\geq 
	c_1 |x|^{-\mu} \rho(0)^{\mu-\lambda}
	( 1- (1-\delta)^\frac{1}{m} )  \geq 0. 
\end{aligned}
\]
The case $|x| \geq \sigma(0)$ is clear. 
The proof is complete. 
\end{proof}

By Lemmas \ref{lem:upsol} and \ref{lem:loap},
as noted in the first part of this subsection, 
the unique solution $u$ of \eqref{eq:fast} satisfies 
$\underline{u}\leq u\leq \overline{u}$. 
In particular, $u$ is positive. 
Hence, $u$ satisfies the properties (iii) and (v) of Theorem \ref{th:blowdown}. 
We prove (iv) in the following lemma.

\begin{lemma}
$u$ satisfies (iv) of Theorem \ref{th:blowdown}. 
\end{lemma}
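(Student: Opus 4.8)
Since $u(\,\cdot\,,t)$ is radially symmetric by property (i), we have $(x/|x|)\cdot\nabla u=\partial_r u$ with $r=|x|$, and the identity $r^{\lambda+1}\partial_r(c_1 r^{-\lambda})=-c_1\lambda$ shows that (iv) asserts exactly that $\partial_r u$ inherits, to leading order, the radial derivative of the limit profile $c_1 r^{-\lambda}$ already detected in (iii). The plan is therefore to upgrade the $C^0$ information in (iii) to a $C^1$ statement by a rescaling argument combined with interior parabolic regularity. I fix $T>0$ and reduce to proving $r^{\lambda+1}\partial_r u(r,t)\to -c_1\lambda$ as $r\to0$, uniformly for $t\in[0,T]$.

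For $R\in(0,1)$ I would set $\kappa_R:=R^{\lambda(1-m)-2}$, which tends to $0$ as $R\to0$ since $\lambda(1-m)>2$ by \eqref{eq:lamcon}, and define on the annulus $A:=\{1/2\le|y|\le2\}$ the rescaled functions $V_R(y,\tau):=R^\lambda u(Ry,\tau/\kappa_R)$. A direct computation from $\partial_t u=\Delta u^m$ shows that $V_R$ again solves $\partial_\tau V_R=\Delta V_R^m$ on $A\times(0,\kappa_R T)$, while the original time $t$ corresponds to $\tau=\kappa_R t$. This normalization keeps the diffusion coefficient of order one and, crucially, sends the target time $\tau=\kappa_R t$ into a vanishing window, so that I am reduced to a short-time problem with smooth initial data. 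Indeed, the explicit form of $u_0$ gives
\[
	V_R(y,0)=R^\lambda u_0(Ry)=(c_1^m|y|^{-m\lambda}+c_2^m R^{m\lambda})^\frac{1}{m}
	\longrightarrow c_1|y|^{-\lambda}\quad\text{in }C^2(A)
\]
as $R\to0$. Moreover, the bounds $\underline{u}\le u\le\overline{u}$ from Lemmas \ref{lem:upsol} and \ref{lem:loap}, evaluated on $|x|=R|y|\in[R/2,2R]$ for $s\in[0,T]$, pinch $V_R$ between two positive constants independent of $R$ (small) and of $\tau\in[0,\kappa_R T]$; here one uses $2R\le\rho(T)$, so that the inner branch $w_{\inn}$ of $\underline{u}$ applies, together with $a(T)R^{m(\lambda-\nu)}\to0$.

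Consequently, on $A\times[0,\kappa_R T]$ the equation is uniformly parabolic with ellipticity constants independent of $R$, and $V_R$ is a bounded solution whose initial data converge in $C^2(A)$. Interior-in-space parabolic regularity up to the initial slice then yields, on a slightly smaller annulus $A'\subset A$, bounds for $V_R$ in $C^{2,\alpha}$ (indeed in higher norms, since the data are smooth) that are uniform in $R$. Combining these with the $C^2$ convergence of $V_R(\,\cdot\,,0)$ and with $\tau=\kappa_R t\to0$ uniformly for $t\in[0,T]$, I obtain $V_R(\,\cdot\,,\kappa_R t)\to c_1|y|^{-\lambda}$ in $C^1(A')$ uniformly in $t\in[0,T]$. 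Differentiating in the radial direction at $|y|=1$ and undoing the scaling, namely $\partial_r V_R(y,\kappa_R t)\big|_{|y|=1}=R^{\lambda+1}\partial_r u(R,t)$, I conclude $R^{\lambda+1}\partial_r u(R,t)\to -c_1\lambda$ uniformly for $t\in[0,T]$, which is (iv) with $r=R$.

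The hard part will be the regularity step. Near the singularity the diffusivity $\sim u^{m-1}$ is tiny, so the bare equation degenerates as $R\to0$; only after the time normalization by $\kappa_R$ does one recover unit-order ellipticity, at the price of a shrinking time window, and one must then verify that the short-time, smooth-data estimates are genuinely uniform in $R$ and valid up to $\tau=0$. This is precisely where the uniform two-sided bounds furnished by the carefully constructed subsolution of Lemma \ref{lem:loap} are indispensable, since they guarantee uniform parabolicity on the entire window $[0,\kappa_R T]$ regardless of its vanishing length.
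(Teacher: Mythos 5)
Your argument is correct, but it takes a genuinely different route from the paper's. The paper proves (iv) by an elementary Tauberian device: it changes variables to $\eta=u_0(r)$, sets $F(\eta,t)=u(u_0^{-1}(\eta),t)$, notes that (iii) gives $F(\eta)/\eta\to1$, and upgrades this to $F'(\eta)\to1$ via Taylor expansion of difference quotients combined with the one-sided bound $\eta\,\partial_\eta^2F\leq C_T$; that bound is extracted from $\partial_t u\leq0$ (hence $\Delta u^m\leq0$), the radial monotonicity, the convexity of $u_0$ and (iii), so no parabolic regularity theory beyond what is already established is needed. You instead rescale parabolically, $V_R(y,\tau)=R^\lambda u(Ry,\tau/\kappa_R)$ with $\kappa_R=R^{\lambda(1-m)-2}\to0$ (using $\lambda>2/(1-m)$ from \eqref{eq:lamcon}), use the two-sided bounds of Lemmas \ref{lem:upsol} and \ref{lem:loap} to pinch $V_R$ between positive constants on the annulus --- your verification that the inner branch $w_{\inn}$ applies for $2R\leq\rho(T)$ and that $a(T)R^{m(\lambda-\nu)}\to0$ is exactly right --- and then invoke initial-interior parabolic estimates, uniform in $R$, to convert the $C^0$ information of (iii) into $C^1$ convergence of $V_R(\,\cdot\,,\kappa_R t)$ to $c_1|y|^{-\lambda}$. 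Your route is more robust and gives more: it yields uniform $C^k$ asymptotics of $u$ near the origin for every $k$, not just the gradient, and it explains transparently why the statement is uniform in $t$ on compact time intervals (the rescaled time window $[0,\kappa_R T]$ collapses). The price is the step you flag yourself: one needs the H\"older-then-Schauder bootstrap for the quasilinear equation up to the initial slice, with constants uniform in $R$, which is standard given the uniform pinching and the uniformly smooth rescaled initial data $(c_1^m|y|^{-m\lambda}+c_2^mR^{m\lambda})^{1/m}$, but it is considerably heavier machinery than the paper's self-contained computation. Both proofs ultimately rest on the same two pillars --- property (iii) and the sub/supersolution bounds --- and differ only in how the $C^0$ asymptotics are differentiated.
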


\begin{proof}
The proof of this lemma is based on \cite{Ku19,Fu19}. 
In particular, the idea of computing \eqref{eq:FTay} below 
is due to \cite[Section 1]{Fu19}. 

Since $u$ is radially symmetric by the property (i) of Theorem \ref{th:blowdown}, 
we regard $u(x,t)$ as a function $u(r,t)$ ($r=|x|$). 
Thus, we prove that 
$r^{\lambda+1} \partial_r u(r,t)\to  -\lambda c_1$ 
locally uniformly for $t\in[0,\infty)$ as $r\to0$.
In this proof, we abbreviate the variable $t$ when no confusion can arise. 
For instance, $u(r,t)$ is written as $u(r)$. 
We also write $\eta=u_0(r)$. 
Note that $r\to0$ is equivalent to $\eta\to\infty$. 
By $u_0'(r)<0$ for $r>0$, 
the initial data $u_0$ has the inverse function $u_0^{-1}$. 
Set $\psi(\eta):=u_0^{-1}(\eta)$ and $F(\eta,t):=u(\psi(\eta),t)$. 
Remark that $r=\psi(\eta)$. 
Let $0<\eps<1/2$. 
From the Taylor expansion, it follows that 
\begin{equation}\label{eq:FTay}
\left\{ 
\begin{aligned}
	&\frac{F((1+\eps)\eta) - F(\eta)}{\eps \eta} - F'(\eta)
	= \eps\eta \int_0^1 F''((1+\theta\eps)\eta) (1-\theta) d\theta, \\
	&\frac{F(\eta) - F((1-\eps)\eta)}{\eps\eta} - F'(\eta) 
	= - \eps\eta \int_0^1 F''((1-\theta\eps)\eta) (1-\theta) d\theta. 
\end{aligned}
\right.
\end{equation}
The property (iii) of Theorem \ref{th:blowdown} 
gives $F(\eta)/\eta = u(r)/u_0(r) \to 1$ 
locally uniformly for $t\in[0,\infty)$
as $\eta\to\infty$. 
Thus, locally uniformly for $t\in[0,\infty)$ as $\eta\to\infty$, 
\begin{equation}\label{eq:Fdiflim}
\left\{ 
\begin{aligned}
	&\frac{F((1+\eps)\eta) - F(\eta)}{\eps \eta}
	= \left( \frac{F((1+\eps)\eta)}{(1+\eps)\eta} (1+\eps) 
	- \frac{F(\eta)}{\eta} \right) \frac{1}{\eps} 
	\to 1, \\
	&\frac{F(\eta) - F((1-\eps)\eta)}{\eps \eta}
	= \left( \frac{F(\eta)}{\eta} 
	- \frac{F((1-\eps)\eta)}{(1-\eps)\eta}(1-\eps)
	\right) \frac{1}{\eps}
	\to 1. 
\end{aligned}
\right. 
\end{equation}

Let $0<T<\infty$. 
We claim that there exist constants $C_T>0$ and $\eta_T>1$ 
depending on $T$ such that 
\begin{equation}\label{eq:etaF}
	\eta \partial_\eta^2 F(\eta,t) \leq C_T
	\qquad \mbox{ for }\eta>\eta_T, \; t\in [0,T). 
\end{equation}
By (iii) of Theorem \ref{th:blowdown}, 
$u_0'<0$ and $u_0''> 0$,  we have 
\begin{equation}\label{EFTPT}
	\eta F''(\eta) = 
	\frac{(u'' u_0' - u' u_0'')u_0}{(u_0')^3}
	\leq 
	\frac{u_0 u''}{(u_0')^2}, 
\end{equation}
where the prime symbols on $u$ mean partial derivatives 
with respect to $r$. 
Recall that $\partial_t u\leq 0$ by Lemma \ref{lem:ex}. 
Then, we have $(u^m)''+(n-1)r^{-1}(u^m)'\leq0$. 
Integrating this inequality over $(r,1)$, we have 
\[
\begin{aligned}
	(u^m)'(r) &\geq 
	(u^m)_r(1) + \int_r^1 \frac{n-1}{\xi} (u^m)' (\xi) d\xi \\
	&=
	(u^m)_r(1) + (n-1)u^m(1) -\frac{n-1}{r}u^m(r) 
	+\int_r^1 \frac{n-1}{\xi^2} u^m(\xi) d\xi \\
	&\geq 
	(u^m)_r(1)  -\frac{n-1}{r}u^m(r). 
\end{aligned}
\]
Then, by (iii) of Theorem \ref{th:blowdown}, 
there exist constants $C_T>0$ and $0<r_T<1$ such that 
$\partial_r u^m(r,t) \geq - C_T r^{-1} u^m(r,t)$ 
for $0<r<r_T$ and $t\in [0,T)$. 
Thus, 
\[
	0\geq \partial_r u(r,t) \geq - C_T r^{-\lambda-1} 
	\qquad \mbox{ for }0<r<r_T, \; t\in [0,T). 
\]
This together with $(u^m)''+(n-1)r^{-1}(u^m)'\leq0$ 
implies that 
\[
\begin{aligned}
	u''(r) \leq (1-m) u(r)^{-1} (u'(r))^2 -(n-1) r^{-1} u'(r) 
	\leq 
	C_T r^{-\lambda-2} 
\end{aligned}
\]
for $0<r<r_T$ and $t\in [0,T)$ by increasing $C_T$ 
and decreasing $r_T$ if necessary. 
Hence, by \eqref{EFTPT} and the definition of $u_0$, we obtain \eqref{eq:etaF}.

From \eqref{eq:etaF} and $0<\eps<1/2$, it follows that 
\[
\begin{aligned}
	&\eps\eta \int_0^1 F''((1+\theta\eps)\eta) (1-\theta) d\theta
	\leq 
	C_T \eps \int_0^1 \frac{1-\theta}{1+\theta\eps} d\theta 
	\leq C_T \eps, \\
	&- \eps\eta \int_0^1 F''((1-\theta\eps)\eta) (1-\theta) d\theta 
	\geq 
	- C_T \eps \int_0^1 \frac{1-\theta}{1-\theta\eps} d\theta 
	\geq -2C_T \eps. 
\end{aligned}
\]
These inequalities together with \eqref{eq:FTay} and \eqref{eq:Fdiflim}
show that 
\[
	1 - C_T \eps \leq \liminf_{\eta\to\infty} F'(\eta) 
	\leq \limsup_{\eta\to\infty} F'(\eta) \leq 1 + 2 C_T \eps. 
\] 
Letting $\eps\to 0$ gives $\partial_\eta F(\eta,t) \to 1$ 
locally uniformly for $t\in[0,\infty)$ as $\eta\to\infty$. 
Since $F'(\eta)=u'(\psi(\eta))\psi'(\eta)=u'(r)/u_0'(r)$ and 
$\eta\to\infty$ is equivalent to $r\to0$, 
we obtain $\partial_r u(r,t)/u_0'(r)\to 1$ 
locally uniformly for $t\in[0,\infty)$ as $r\to0$. 
This completes the proof. 
\end{proof}

\subsection{Convergence of the solution}\label{subsec:conv}

Finally, we show the convergence of the solution. 

\begin{lemma}
The unique solution $u$ of \eqref{eq:fast} 
satisfies the property (vi) of Theorem \ref{th:blowdown}. 
\end{lemma}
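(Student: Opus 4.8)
The plan is to show that $u(\,\cdot\,,t)$ converges as $t\to\infty$ to a \emph{stationary} radial solution of the fast diffusion equation, and then to pin that limit down to the constant $c_2$ using the two-sided bounds already established. First I would record the existence of a pointwise limit: by property (ii) the map $t\mapsto u(x,t)$ is nonincreasing, and the bound $\underline u\le u\le\overline u=u_0$ together with $\underline u\ge c_2$ (equivalently, by comparison with the constant solution $c_2$, which lies below $u_0$) gives $c_2\le u\le u_0$. Hence $u_\infty(x):=\lim_{t\to\infty}u(x,t)$ exists for every $x\in\R^n\setminus\{0\}$, is radially symmetric as a limit of radial functions, and satisfies $c_2\le u_\infty\le u_0$.

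Next I would upgrade this monotone pointwise convergence to $C^2_\loc$ convergence. On any compact set $K\subset\R^n\setminus\{0\}$ the bounds $c_2\le u\le u_0$ confine $u$ to a compact subinterval of $(0,\infty)$, so $\partial_t u=\Delta u^m$ is uniformly parabolic with smooth, uniformly bounded coefficients on a neighbourhood of $K$, uniformly in $t$. Interior parabolic estimates applied to the time-translates $u(\,\cdot\,,\,\cdot\,+k)$ then yield $C^{2,1}$ bounds on $K\times[1,2]$ that are uniform in $k$. By Arzel\`a--Ascoli every sequence $k_j\to\infty$ has a subsequence along which $u(\,\cdot\,,\,\cdot\,+k_j)$ converges in $C^{2,1}_\loc$; since the pointwise limit $u_\infty$ is unique, the full family converges, i.e. $u(\,\cdot\,,t)\to u_\infty$ in $C^2_\loc(\R^n\setminus\{0\})$ as $t\to\infty$.

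I would then check that $u_\infty$ is stationary. Monotonicity gives $\int_T^\infty\!\int_K|\partial_t u|\,dx\,dt=\int_K\bigl(u(x,T)-u_\infty(x)\bigr)\,dx\to0$ as $T\to\infty$, and the uniform $C^{2,1}$ bounds make $\partial_t u$ equicontinuous on $K\times[T,T+1]$; a standard argument then forces $\partial_t u(\,\cdot\,,t)\to0$ locally uniformly. Passing to the limit in $\partial_t u=\Delta u^m$ and using the $C^2_\loc$ convergence gives $\Delta(u_\infty^m)=0$ on $\R^n\setminus\{0\}$. Thus $u_\infty^m$ is a radial harmonic function, so for $n\ge3$ it has the form $u_\infty^m=\alpha+\beta|x|^{2-n}$ for constants $\alpha,\beta$.

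Finally I would identify the constants. Property (v) gives $u_\infty\to c_2$ as $|x|\to\infty$, whence $\alpha=c_2^m$. The lower bound $u_\infty\ge c_2$ forces $\beta|x|^{2-n}\ge0$, i.e. $\beta\ge0$, while $u_\infty^m\le u_0^m=c_1^m|x|^{-m\lambda}+c_2^m$ gives $\beta|x|^{2-n}\le c_1^m|x|^{-m\lambda}$, that is $\beta\le c_1^m|x|^{\,n-2-m\lambda}$; since $m\lambda<n-2$ by \eqref{eq:lamcon}, letting $|x|\to0$ yields $\beta\le0$. Hence $\beta=0$ and $u_\infty\equiv c_2$, which is (vi). The main obstacle is the second step: turning the a priori monotone pointwise convergence into genuine $C^2_\loc$ convergence of both $u$ and $\Delta u^m$ (so that the limit is truly stationary), which hinges on the uniform parabolicity coming from the two-sided bounds $c_2\le u\le u_0$ away from the origin.
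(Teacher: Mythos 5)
Your argument is correct, and its first two steps coincide with the paper's proof (which follows \cite[Lemma 53.10]{QSbook}): the monotone pointwise limit $u_\infty$ exists by property (ii), and parabolic estimates applied to the time-translates $u(\,\cdot\,,\,\cdot\,+k)$ upgrade this to $C^2_\loc$ convergence and show $\Delta(u_\infty^m)=0$ on $\R^n\setminus\{0\}$. Where you genuinely differ is in identifying the limit. You invoke radial symmetry to write $u_\infty^m=\alpha+\beta|x|^{2-n}$, read off $\alpha=c_2^m$ from the behaviour at infinity, and kill $\beta$ by squeezing between the bounds $c_2^m\le u_\infty^m\le c_1^m|x|^{-m\lambda}+c_2^m$ near the origin, using $m\lambda<n-2$. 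The paper never uses radial symmetry at this stage: the same upper bound gives $u_\infty^m=o(|x|^{2-n})$ as $x\to0$, so the origin is a removable singularity of the harmonic function $u_\infty^m$, which therefore extends to a bounded harmonic function on all of $\R^n$ and is constant by Liouville; the two-sided bound then forces the constant to be $c_2^m$. Both endgames rest on exactly the same inequality $\lambda<(n-2)/m$ from \eqref{eq:lamcon}; yours is more elementary but leans on property (i), while the paper's would survive without radial symmetry. One caveat: your uniform-parabolicity step, which confines $u$ to a compact subinterval of $(0,\infty)$ via $c_2\le u$, tacitly assumes $c_2>0$; since the lemma is stated for $c_2\ge0$, in the case $c_2=0$ the interior positive lower bound must instead come from the subsolution $\underline{u}$ on finite time intervals (a point on which the paper is equally terse).
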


\begin{proof}
The following argument is due to \cite[Lemma 53.10]{QSbook}. 
Recall that $u$ satisfies $\partial_t u\leq 0$ in $\R^n\times(0,\infty)$ 
by the property (ii) of Theorem \ref{th:blowdown}. 
This together with $u>0$ shows that 
$v(x):=\lim_{t\to\infty} u(x,t)$ exists for each $x\in \R^n\setminus \{0\}$. 
On the other hand, let $v_i(x,t):=u(x,t+i)$. 
By Lemma \ref{lem:upsol}, we have 
$0<v_i(x,t) \leq (c_1^m |x|^{-m\lambda} +c_2^m)^{1/m}$ 
for $x\in \R^n\setminus\{0\}$ and $t>0$. 
Then, the parabolic regularity theory implies that 
$v_i$ converges along subsequence $v_{i'}$ to a function $w=w(x,t)$ in 
$C^{2,1}_\loc((\R^n\setminus\{0\})\times(0,\infty))$ as $i'\to\infty$. 
Note that $w$ satisfies $\partial_t w=\Delta w^m$ in 
$(\R^n\setminus\{0\})\times(0,\infty)$. 
By $v(x)=\lim_{t\to\infty} u(x,t)$, we have $v=w$. 
Thus, $u(\,\cdot\,,t) \to v$ in $C^2_\loc(\R^n\setminus\{0\})$ as $t\to\infty$. 
Moreover, $v$ satisfies $\Delta v^m=0$ in $\R^n\setminus\{0\}$.

We prove $v\equiv c_2$ on $\R^n\setminus\{0\}$. 
By Lemmas \ref{lem:upsol} and \ref{lem:loap}, we have 
\begin{equation}\label{eq:vbdd}
	c_2\leq v(x) \leq (c_1^m |x|^{-m\lambda} +c_2^m)^\frac{1}{m}, 
	\qquad x\in \R^n\setminus\{0\}. 
\end{equation}
In particular, by $\lambda<(n-2)/m$, we have 
$v(x)^m=o(|x|^{2-n})$ as $x\to0$. 
Then, $x=0$ is the removable singularity of $v^m$ 
(see \cite[Theorem 2.69]{Fobook} 
for the removability of the singularity in harmonic functions), 
and so $v^m$ can be extended as a harmonic function $\tilde v^m \in C^2(\R^n)$ 
satisfying $v^m\equiv \tilde v^m$ on $\R^n\setminus\{0\}$. 
By \eqref{eq:vbdd}, $\tilde v^m$ is a bounded harmonic function on $\R^n$. 
Hence, Liouville's theorem guarantees that $\tilde v^m$ is constant. 
By \eqref{eq:vbdd} again, we obtain $v \equiv \tilde v\equiv c_2$ 
on $\R^n\setminus\{0\}$. 
Hence, $u(\,\cdot\,,t) \to c_2$ in $C^2_\loc(\R^n\setminus\{0\})$ as $t\to\infty$. 
\end{proof}

The proof of Theorem \ref{th:blowdown} is complete.

\section{From analysis to geometry}\label{FATG}
In this section, we interpret the properties of the solution 
given in Section \ref{sec:exfas} into 
the ones of Riemannian metrics on $M=\mathbb{R}^{n}\setminus \{0\}$ and prove the main theorem, Theorem \ref{mainthm}. 

As explained in Subsection \ref{Yamabe-fast}, 
once we found the solution $u_{t}=u(\,\cdot\,,t)$ of \eqref{eq:fast}, 
then 
$g_{t}:=v_{t}^{4/(n-2)}g_{\mathbb{R}^n}$ 
becomes the solution of the Yamabe flow \eqref{y-flow}, where $v_{t}$ is defined by \eqref{uv}. 
More directly, $g_{t}:=u_{s}^{4/(n+2)}g_{\mathbb{R}^n}$ with $s=(n-2)t/((n-1)(n+2))$. 

Throughout this section, we fix $c_{1}, c_{2}>0$ and $(n+2)/2<\lambda<n+2$. 
We remark that we do not allow $c_{2}=0$ even it is allowed in Theorem \ref{th:blowdown}. 
Let $u_{t}=u(\,\cdot\,,t)$ be the unique long time solution 
guaranteed by  Theorem \ref{th:blowdown} with $m=m_{c}$. 
At this moment the first sentence of (4) of Theorem \ref{mainthm}, that is, 
the existence of a unique long time solution $g_{t}$ ($t\in[0,\infty)$) of the Yamabe flow \eqref{y-flow} starting from $g_{0}$ 
(defined in \eqref{initialg0}), has been already proved. 
We remark that the subscript $t$ in $u_{t}$ or $g_{t}$ are usually dropped to simplify the notation. 

Before proving the properties in Theorem \ref{mainthm}, 
it is better to fix the notions of 
``asymptotically Euclidean'' and ``asymptotically conical''. 
To do that, let $M$ be a general manifold (not restricted to $\mathbb{R}^{n}\setminus \{0\}$) with $\dim M=n$ and without boundary. 

\begin{definition}\label{mwithe}
For $k\in\mathbb{N}$, we say that $M$ is a \emph{manifold with $k$ ends} 
if there exists a compact set $K\subset M$ such that $M\setminus K$ 
is a disjoint union of $k$ connected components $E_{1},\dots,E_{k}$ and 
the closure of each $E_{i}$ is diffeomorphic to $S_{i}\times [R_{i},\infty)$, 
where $R_{i}>0$ and $S_{i}$ should be an $(n-1)$-dimensional compact manifold without boundary. 
Each $E_{i}$ is called the \emph{end} on $M$ and $S_{i}$ is called the \emph{link} of $E_{i}$. 
\end{definition}

Next, we give the definition of asymptotically Euclidean ends and asymptotically conical ends. 
Even though its definition can be written for general Riemannian manifolds, however, 
we give a definition under some assumptions 
which sufficiently work in our paper. 

\begin{definition}\label{AEAC}
Let $M$ be an $n$-dimensional manifold with $k$ ends $E_{1},\dots, E_{k}$. 
We assume that the link $S_{i}$ of $E_{i}$ is diffeomorphic to the $(n-1)$-dimensional standard sphere $S^{n-1}$ for each $i=1,\dots,k$. 
Let $g$ be a Riemannian metric on $M$. 
We assume that $g|_{E_{i}}$ is written as $g|_{E_{i}}=F_{i}(\rho_{i})g_{S^{n-1}}+(d\rho_{i})^2$ via 
the diffeomorphism $\overline{E_{i}}\cong S^{n-1}\times [R_{i},\infty)$ 
for some smooth positive function $F_{i}:[R_{i},\infty)\to\mathbb{R}$, where $\rho_{i}$ is the standard coordinate on $[R_{i},\infty)$. 
Then, we say that the end $E_{i}$ with metric $g$ is asymptotically Euclidean of order $\tau_{i}>0$ if 
$F_{i}$ satisfies 
\[
	(\partial_{\rho_{i}})^{j}( F_{i}(\rho_{i})-\rho_{i}^2)
	=\mathcal{O}(\rho_{i}^{-\tau_{i}-j})\qquad\mbox{ as } \rho_{i}\to \infty
\]
for all $j\in \mathbb{N}$. 
Similarly, we say that the end $E_{i}$ with metric $g$ is asymptotically conical of order $\tau_{i}>0$ if 
there exists $0<B_{i}<1$ such that 
$F_{i}$ satisfies 
\[
	(\partial_{\rho_{i}})^{j}( F_{i}(\rho_{i})-B_{i}\rho_{i}^2)
	=\mathcal{O}(\rho_{i}^{-\tau_{i}-j})\qquad\mbox{ as }\rho_{i}\to \infty
\]
for all $j\in\mathbb{N}$. In this case, $(S^{n-1},B_{i}g_{S^{n-1}})$ is called the link of $E_{i}$. 
\end{definition}

\subsection{Proof of (4)-(a)}
We check that $(\mathbb{R}^{n}\setminus\{0\},g_{t})$ 
is complete for all $t\in[0,\infty)$. 
Denote by $r:=|x|\in (0,\infty)$ the radius function from the origin. 
Since $u$ is radially symmetric for each $t\in[0,\infty)$ 
by (i) of Theorem \ref{th:blowdown}, 
we can write $u(x,t)$ as $u(r,t)$. 
In what follows, we abbreviate the variable $t$ when no confusion can arise. 
For instance, $u(r,t)$ is written as $u(r)$. 
We define another ``radius function'' $\rho=\rho(r):(0,\infty)\to\mathbb{R}$ by 
\[
\rho(r):=\int_{1}^{r}u^{\frac{2}{n+2}}(\tilde{r})d\tilde{r}.
\]
Then, $\rho$ is monotone increasing and changes sign at $r=1$. 
Indeed, this is the length (with sign) of a ray emerging from a point in the standard sphere in $\mathbb{R}^{n}\setminus\{0\}$ 
which diverges as $r\to \infty$ and converges to the origin as $r\to 0$. 
Then, by the well-known test for completeness, 
it suffices to prove that 
\begin{equation}\label{rhorho}
	\lim_{r\to 0}\rho(r)=-\infty, \qquad  
	\lim_{r\to\infty}\rho(r)=\infty
\end{equation}
to say that $(\mathbb{R}^{n}\setminus\{0\},g)$ is complete. 
However, this immediately follows from the asymptotic order of $u$ identified as (iii) and (v) of Theorem \ref{th:blowdown} 
with the fact that $\lambda>(n+2)/2\geq1$. 

Let $K:=\{\,x\in \mathbb{R}^n\setminus\{0\}; -1\leq \rho(|x|)\leq 1 \,\}$, 
and put $E_{1}:=\{\,x\in \mathbb{R}^n\setminus\{0\};\rho(|x|)>1 \,\}$ and 
$E_{2}:=\{\,x\in \mathbb{R}^n\setminus\{0\}; \rho(|x|)<-1 \,\}$. 
Then, it is clear that $K$ is compact and $\overline{E_{1}}$ and $\overline{E_{2}}$ are diffeomorphic to half cylinders. 
By the definition of $g=g_{t}$, it is trivial that $g$ is conformal to the standard metric on $\mathbb{R}^{n}\setminus\{0\}$. 
Then, the proof of (4)-(a) is complete.

\subsection{Proof of (1) and (2)}
We prove (1) and (2) for the initial metric in Theorem \ref{mainthm}. 
Since \eqref{rhorho} has been confirmed, for any $R\in\mathbb{R}$, we see that $\tilde{\rho}(r):=\rho(r)+R$ is a smooth diffeomorphism 
form $(0,\infty)$ to $(-\infty,\infty)$. 
Thus, we can write $r$ as a function of $\tilde{\rho}$ as $r=r(\tilde{\rho})$, 
and define a function $F=F(\tilde{\rho}):(-\infty,\infty)\to (0,\infty)$ by 
\begin{equation}\label{defFrho}
	F(\tilde{\rho})
	:=r^2 u^{\frac{4}{n+2}}(r) \qquad\mbox{ with } r=r(\tilde{\rho}). 
\end{equation}

Since the standard metric $g_{\mathbb{R}^{n}}$ is written as $g_{\mathbb{R}^{n}}=r^2g_{S^{n-1}}+(dr)^2$ 
by the standard metric $g_{S^{n-1}}$ of the $(n-1)$-dimensional standard sphere, 
the conformal metric $g=u^{4/(n+2)}g_{\mathbb{R}^{n}}$ 
can be expressed as 
$g=r^2u^{4/(n+2)}(r)g_{S^{n-1}}+u^{4/(n+2)}(r)(dr)^2$. 
By changing the radius coordinate from $r$ to $\tilde{\rho}$ and applying the relation $d\tilde{\rho}=\rho'(r)dr=u^{2/(n+2)}dr$, 
we see that 
\[
	g=F(\tilde{\rho})g_{S^{n-1}}+(d\tilde{\rho})^2. 
\]
This is nothing but the warped product of $(S^{n-1}, g_{S^{n-1}})$ and $\mathbb{R}$ with warping function $F(\tilde{\rho})$. 
Hence, it is diffeomorphic to the cylinder $S^{n-1}\times\mathbb{R}$ and the ``shape'' as a Riemannian manifold is determined 
by the asymptotic behavior of $F(\tilde{\rho})$ as $\tilde{\rho}\to -\infty$ and $\tilde{\rho} \to\infty$.

First, we prove (1), that is, $E_{1}$ is asymptotically Euclidean 
of order $m_{c}\lambda-2$ 
when $t=0$ under the assumption $n\geq 6$. 
Put 
\[
	R_{1}(r):=-\frac{2}{n+2}\int_{r}^{\infty}
	\tilde{r}u_{0}^{-\frac{n}{n+2}}(\tilde{r})
	\partial_{r}u_{0}(\tilde{r})d\tilde{r}, \qquad 
	R_{1}:=u_{0}^{\frac{2}{n+2}}(1)-R_{1}(1). 
\]
We note that $R_{1}(r)$ is well-defined, since $m_{c}\lambda>1$ and 
\begin{equation}\label{yesti1}
	\tilde{r}u_{0}^{-\frac{n}{n+2}}(\tilde{r})\partial_{r}u_{0}(\tilde{r})
	= 
	\mathcal{O}(\tilde{r}^{-m_{c}\lambda}) \qquad \mbox{ as } 
	\tilde{r}\to\infty.  
\end{equation}
Put 
\[\rho_{1}(r):=\rho(r)+R_{1}. \]
Then, a map sending $x\in \overline{E_{1}}$ to $(x/|x|,\rho_{1}(|x|))\in S^{n-1}\times[1+R_{1},\infty)$ gives a diffeomorphism. 
Setting $F(\rho_{1})$ by replacing $\tilde{\rho}$ in \eqref{defFrho} with $\rho_{1}$, we have 
$g_{0}|_{E_{1}}=F(\rho_{1})g_{S^{n-1}}+(d\rho_{1})^2$. 
We check the asymptotic behavior of $F$ ($=F_{0}$) on $E_{1}$. 
By integration by parts, we have 
\[
\begin{aligned}
\rho(r)&=
\left. \tilde{r}u_{0}^{\frac{2}{n+2}}(\tilde{r})\right|_{\tilde{r}=1}^{\tilde{r}=r}
-\frac{2}{n+2}\int_{1}^{r}\tilde{r}u_{0}^{-\frac{n}{n+2}}(\tilde{r})\partial_{r}u_{0}(\tilde{r})d\tilde{r}
	\\
&=ru_{0}^{\frac{2}{n+2}}(r)-u_{0}^{\frac{2}{n+2}}(1)-\frac{2}{n+2}\left(\int_{1}^{\infty}-\int_{r}^{\infty}\right)\tilde{r}u_{0}^{-\frac{n}{n+2}}(\tilde{r})\partial_{r}u_{0}(\tilde{r})d\tilde{r}\\
&=\sqrt{F(\rho_{1}(r))}-R_{1}(r)-R_{1}. 
\end{aligned}
\]
Then, by \eqref{yesti1} and $\rho_{1}(r)=\rho(r)+R_{1}$, we have 
\[
	\sqrt{F(\rho_{1}(r))}-\rho_{1}(r)
	=R_{1}(r)
	=\mathcal{O}(r^{-m_{c}\lambda+1})\qquad\mbox{ as } r\to\infty. 
\]
Taking the first derivative of the both hand sides with respect to $\rho_{1}$, we have 
\[\partial_{\rho_{1}}(\sqrt{F(\rho_{1})}-\rho_{1})=\partial_{r}R_{1} \cdot (\partial_{r}\rho_{1})^{-1}=\frac{2}{n+2}ru_{0}^{-1}\partial_{r}u_{0}=-\frac{2\lambda c_{1}^{m_{c}}}{n+2}\frac{r^{-m_{c}\lambda}}{c_{1}^{m_{c}}r^{-m_{c}\lambda}+c_{2}^{m_{c}}}. \]
Taking the $\rho_{1}$-derivative of this equality inductively, one can see that 
\[
	\partial_{\rho_{1}}^{\ell}(\sqrt{F(\rho_{1}(r))}-\rho_{1}(r))
	=\mathcal{O}(r^{-m_{c}\lambda+1-\ell})\qquad
	\mbox{ as }r\to\infty
\]
for all $\ell\geq 0$.
Since $r=\mathcal{O}(\rho_{1}(r))$ as $r\to\infty$, we have  
\[
	\partial_{\rho_{1}}^{\ell}(\sqrt{F(\rho_{1})}-\rho_{1})
	=\mathcal{O}(\rho_{1}^{-m_{c}\lambda+1-\ell})
	\qquad\mbox{ as }\rho_{1}\to\infty
\]
for all $\ell\geq 0$. 
This immediately implies that 
\[
	\partial_{\rho_{1}}^{\ell}(F(\rho_{1})-\rho_{1}^2)
	=\mathcal{O}(\rho_{1}^{-m_{c}\lambda+2-\ell})\qquad
	\mbox{ as }\rho_{1}\to\infty 
\]
for all $\ell\geq 0$. 
This proves (1).  
Remark that $m_{c}\lambda>(n-2)/2\geq 2 $ 
under the assumption $n\geq 6$. 

Next, we prove (2), that is, $E_{2}$ is asymptotically conical when $t=0$ for $n\geq 3$. 
Put 
\[
\begin{aligned}
	&R_{2}(r):=\frac{2}{n+2}
	\int_{0}^{r} u_{0}^{-\frac{n}{n+2}}(\tilde{r})
	(\lambda u_{0}(\tilde{r})+\tilde{r} \partial_{r}u_{0}(\tilde{r}))
	d\tilde{r}, \\
	&R_{2}
	:=\frac{1}{1-\frac{2\lambda}{n+2}}
	\left(-u_{0}^{\frac{2}{n+2}}(1)+R_{2}(1)\right). 
\end{aligned}
\]
We should check that $R_{2}(r)$ is well-defined. 
Clearly, we have 
\[
	u_{0}^{-\frac{n}{n+2}}(\tilde{r})
	=\mathcal{O}(\tilde{r}^{\frac{n}{n+2}\lambda}) 
	\qquad \mbox{ as } \tilde{r}\to0. 
\]
Moreover, by careful calculations, one can see that 
\[
	\lambda u_{0}(\tilde{r})+\tilde{r}\partial_{r}u_{0}(\tilde{r})
	=\lambda\frac{c_2^{m_{c}}\tilde{r}^{m_{c}\lambda}}{c_{1}^{m_{c}}+c_{2}^{m_{c}}\tilde{r}^{m_{c}\lambda}}
	(c_{1}^{m_{c}}\tilde{r}^{-m_{c}\lambda}+c_{2}^{m_{c}})^{\frac{1}{m_{c}}}
	=\mathcal{O}(\tilde{r}^{m_{c}\lambda-\lambda})
	\qquad\mbox{ as }\tilde{r}\to 0. 
\]
These imply that 
\begin{equation}\label{yesti2}
u_{0}^{-\frac{n}{n+2}}(\tilde{r})(\lambda u_{0}(\tilde{r})+\tilde{r}\partial_{r}u_{0}(\tilde{r}))=\mathcal{O}(\tilde{r}^{\frac{n-4}{n+2}\lambda}) 
\qquad \mbox{ as } \tilde{r}\to 0. 
\end{equation}
Thus, the integral of the left hand side of \eqref{yesti2} on $(0,r]$ is finite 
since $(n-4)\lambda/(n+2)> -1$. 
Put 
\[\rho_{2}(r):=-\rho(r)+R_{2}. \]
Then, a map sending $x\in \overline{E_{2}}$ to $(x/|x|,\rho_{2}(|x|))\in S^{n-1}\times[1+R_{2},\infty)$ gives 
a diffeomorphism. 
Setting 
\[
	F(\rho_{2}):=r^2 u^{\frac{4}{n+2}}(r) 
	\qquad\mbox{ with } \rho_{2}(r)=\rho_{2},  
\]
we have 
$g_{0}|_{E_{2}}=F(\rho_{2})g_{S^{n-1}}+(d\rho_{2})^2$. 
We check the asymptotic behavior of $F$ ($=F_{0}$) on $E_{2}$. 
By integration by parts, we have 
\[
\begin{aligned}
	\rho(r)
	&=\int_{1}^{r}(\tilde{r}^{\lambda}u_{0}(\tilde{r}))^{\frac{2}{n+2}}
	\tilde{r}^{-\frac{2\lambda}{n+2}}d\tilde{r}\\
	&=\frac{1}{1-\frac{2\lambda}{n+2}}
	\left(\left. 
	\tilde{r}u_{0}^{\frac{2}{n+2}}(\tilde{r})
	\right|_{\tilde{r}=1}^{\tilde{r}=r}
	-\frac{2}{n+2}\left(\int_{1}^{0}-\int_{r}^{0}\right)
	u_{0}^{-\frac{n}{n+2}}(\tilde{r})
	(\lambda u_{0}(\tilde{r})+\tilde{r} \partial_{r}u_{0}(\tilde{r}))
	d\tilde{r} \right)\\
	&=\frac{1}{1-\frac{2\lambda}{n+2}}\left(
	\sqrt{F(\rho_{2}(r))}
	-R_{2}(r)\right)+R_{2}. 
\end{aligned}
\]
Then, by \eqref{yesti2} and $\rho_{2}(r)=-\rho(r)+R_{2}$, we see that 
\begin{equation}\label{asympE2}
\sqrt{F(\rho_{2}(r))}-\left(\frac{2\lambda}{n+2}-1\right)\rho_{2}(r)
	=R_{2}(r)
	=\mathcal{O}(r^{\frac{n-4}{n+2}\lambda+1})
	\qquad\mbox{ as }r\to 0. 
\end{equation}
Since 
$r^{1-{2\lambda/(n+2)}}=\mathcal{O}(\rho_{2}(r))$ as $r\to 0$, we have 
\[
	\sqrt{F(\rho_{2})}-\sqrt{B}\rho_{2}
	=\mathcal{O}(\rho_{2}^{-(\tau+1)})\qquad\mbox{ as }
	 \rho_{2}\to\infty,
\]
where 
\begin{equation}\label{Btau}
	B:=\left(\frac{2}{n+2}\lambda-1\right)^2\in(0,1), \qquad 
	\tau:=
	\frac{\frac{n-4}{n+2}\lambda+1}{\frac{2}{n+2}\lambda-1}-1
	=\frac{\frac{n-6}{n+2}\lambda+2}{\frac{2}{n+2}\lambda-1}>0. 
\end{equation}
Taking the first derivative of \eqref{asympE2} with respect to $\rho_{2}$, one can see that 
\[\partial_{\rho_{2}}(\sqrt{F(\rho_{2})}-\sqrt{B}\rho_{2})=\partial_{r}R_{2} \cdot (\partial_{r}\rho_{2})^{-1}=-\frac{2\lambda c_{2}^{m_{c}}}{n+2}(c_{1}^{m_{c}}r^{-m_{c}\lambda}+c_{2}^{m_{c}})^{-1}. \]
Taking the $\rho_{2}$-derivative of this equality inductively, one can see that 
\[
	\partial_{\rho_{2}}^{\ell}(\sqrt{F(\rho_{2}(r))}-\sqrt
	{B}\rho_{2}(r))=\mathcal{O}(r^{\alpha_{\ell}})
	\qquad\mbox{ as }r\to 0
\]
for all $\ell\geq 0$, where 
\[\alpha_{\ell}:=\left(\frac{2}{n-2}(\ell-1)+\ell \right)m_{c}\lambda+(\ell-1)(-m_{c}\lambda-1).\]
By straightforward computations, we can see that 
\[\alpha_{\ell}=\left(\frac{2}{n+2}\lambda-1\right)\ell+\frac{n-4}{n+2}\lambda+1.\]
This equality with $r^{1-{2\lambda/(n+2)}}=\mathcal{O}(\rho_{2}(r))$ implies that 
\[
	\partial_{\rho_{2}}^{\ell}(\sqrt{F(\rho_{2})}-\sqrt
	{B}\rho_{2})
	=\mathcal{O}(\rho_{2}^{-(\tau+1)-\ell})
	\qquad\mbox{ as }\rho_{2}\to\infty
\]
for all $\ell\geq 0$. 
This immediately implies that 
\[
	\partial_{\rho_{2}}^{\ell}(F(\rho_{2})-B\rho_{2}^2)
	=\mathcal{O}(\rho_{2}^{-\tau-\ell})
	\qquad\mbox{ as }\rho_{2}\to\infty. 
\]
This proves (2), that is, $E_{2}$ with the initial metric $g_{0}$ is asymptotically conical 
of order $\tau>0$ in the sense of $C^{\infty}$. 
By looking at $B$, we can roughly imagine that the end $E_{2}$ tends to be an asymptotically Euclidean end when the parameter 
$\lambda\in ((n+2)/2,n+2)$ approaches to $n+2$ and, 
contrary, it tends to be a narrower horn when $\lambda$ approaches to $(n+2)/2$. 

\subsection{Proof of (3)}
To prove (3), we calculate 
the scalar curvature of $g_{0}=u_{0}^{4/(n+2)}g_{\mathbb{R}^n}$. 
By the general formula \eqref{scalc} with $v:=c_{1}^{m_{c}}|x|^{-m_{c}\lambda}+c_{2}^{m_{c}}$ and $\scal(g_{\mathbb{R}^{n}})=0$, 
we have
\[
\begin{aligned}
	\mathrm{scal}(g_{0})&=-\frac{4(n-1)}{n-2}v^{-\frac{n+2}{n-2}}(r)
	\left(\partial_{r}^2v(r)+\frac{1}{r}\partial_{r}v(r)\right)\\
	&=-\frac{4(n-1)}{n-2}v^{-\frac{n+2}{n-2}}(r)c_{1}^{m_{c}}
	(m_{c}\lambda)^2r^{-m_{c}\lambda-2}<0, 
\end{aligned}
\]
and (3) is proved.

\subsection{Proof of (4)}
The property (4) is about the Yamabe flow starting from $g_{0}$. 
We remark that the existence and uniqueness of a long time solution is already proved as we wrote at the beginning of this section. 
We also remark that the property (4)-(a) is already proved. 
Thus, we prove (4)-(b), (4)-(c) and (4)-(d). 
However, (4)-(d) clearly follows from (vi) of Theorem \ref{th:blowdown}. 
Hence, it suffices to prove (4)-(b) and (4)-(c). 

First, we prove (4)-(b). 
Since the volume form of $g_{t}=u^{4/(n+2)}g_{\mathbb{R}^n}$ is given by 
\[\mathop{\mathrm{vol}}(g_{t})=u^{\frac{2n}{n+2}}\mathop{\mathrm{vol}}(g_{\mathbb{R}^n}), \]
we can easily see that (4)-(b) holds by (iii) and (v) of Theorem \ref{th:blowdown}. 

Next, we prove (4)-(c), that is, the Yamabe constant $Y(M,g_{t})$ of $(M,g_{t})$ 
is equal to $Y(S^{n},g_{S^{n}}) (>0)$ for all $t\in[0,\infty)$. 
We remark that for a general Riemannian manifold $(M,g)$ 
(not necessarily compact), its Yamabe constant is defined as follows. 

\begin{definition}\label{Yaminv}
Let $(M,g)$ be an $n$-dimensional Riemannian manifold. 
Then, the Yamabe constant of $(M,g)$ is defined by 
\[Y(M,g):=\inf_{f\in C_0^{\infty}(M)}\frac{4\frac{n-1}{n-2}\int_{M}|\nabla f|^{2}dv_{g}+\int_{M}\scal(g)f^2dv_{g}}{\left(\int_{M}|f|^{\frac{2n}{n-2}}dv_{g}\right)^{\frac{n-2}{n}}}, \]
where $C_{0}^{\infty}(M)$ is the set of all smooth functions 
with compact support on $M$. 
\end{definition}

Then, we have $Y(M,g_{t})=Y(\mathbb{R}^{n}\setminus\{0\},g_{\mathbb{R}^{n}})$ since our initial Riemannian manifold $(M,g_{t})$ 
is conformal to $(\mathbb{R}^{n}\setminus\{0\},g_{\mathbb{R}^{n}})$ 
and the Yamabe constant is conformally invariant. 
Let $N$ and $S$ be the North and South poles of $S^{n}$. 
Since the stereographic projection gives a conformal isometry between $(\mathbb{R}^{n},g_{\mathbb{R}^{n}})$ and $(S^{n}\setminus\{N\},g_{S^{n}})$, 
$(\mathbb{R}^{n}\setminus\{0\},g_{\mathbb{R}^{n}})$ 
is conformal to $(S^{n}\setminus\{S,N\},g_{S^{n}})$. 
Thus, $Y(\mathbb{R}^{n}\setminus\{0\},g_{\mathbb{R}^{n}})=Y(S^{n}\setminus\{S,N\},g_{S^{n}})$. 
Since it is known that $Y(U,g_{S^{n}})=Y(S^{n},g_{S^{n}})$ for an open subset $U$ of $S^{n}$ (see \cite[Lemma 2.1]{SY88}), 
we have $Y(S^{n}\setminus\{S,N\},g_{S^{n}})=Y(S^{n},g_{S^{n}})=n(n-1)\Vol(S^{n},g_{S^{n}})^{2/n}$, where the last equality follows from \cite{A76}. 
Thus, we have 
\[
	Y(M,g_{t})=Y(S^{n},g_{S^{n}})=n(n-1)\Vol(S^{n},g_{S^{n}})^{\frac{2}{n}}>0. 
\]
Hence, (4) is proved. 
The proof of Theorem \ref{mainthm} is complete.


\begin{thebibliography}{10}


\bibitem{A76}
T. Aubin,
\'{E}quations diff\'{e}rentielles non lin\'{e}aires et probl\`eme de Yamabe concernant la courbure scalaire, 
J. Math. Pures Appl. 55 (1976), 269--296.

\bibitem{B05}
S. Brendle, 
Convergence of the Yamabe flow for arbitrary initial energy, 
J. Differential Geom. 69 (2005), 217--278.

\bibitem{CW21}
E. Chen and Y. Wang, 
The Yamabe flow on asymptotically flat manifolds, preprint, 
arXiv:2102.07717.

\bibitem{CZ15}
L. Cheng and A. Zhu, 
Yamabe flow and ADM mass on asymptotically flat manifolds, 
J. Math. Phys. 56 (2015), 101507, 21 pp. 

\bibitem{C92}
B. Chow,
The Yamabe flow on locally conformally flat manifolds with positive Ricci curvature,  
Comm. Pure Appl. Math. 45 (1992), 1003--1014.

\bibitem{DDKS16}
P. Daskalopoulos, M. del Pino, J. King and N. Sesum, 
Type I ancient compact solutions of the Yamabe flow,  
Nonlinear Anal. 137 (2016), 338--356.

\bibitem{DDKS17}
P. Daskalopoulos, M. del Pino, J. King and N. Sesum, 
New type I ancient compact solutions of the Yamabe flow,  
Math. Res. Lett. 24 (2017),  1667--1691.

\bibitem{DDS18}
P. Daskalopoulos, M. del Pino and N. Sesum, 
Type II ancient compact solutions to the Yamabe flow, 
J. Reine Angew. Math. 738 (2018), 1--71.


\bibitem{DS08}
P. Daskalopoulos and N. Sesum, 
On the extinction profile of solutions to fast diffusion,  
J. Reine Angew. Math. 622 (2008), 95--119.

\bibitem{FMTY20}
M. Fila, P. Mackov\'a, J. Takahashi and E. Yanagida, 
Moving singularities for nonlinear diffusion equations in two space dimensions, 
J. Elliptic Parabol. Equ. 6 (2020), 155--169.

\bibitem{FTY19}
M. Fila, J. Takahashi and E. Yanagida, 
Solutions with moving singularities for equations of porous medium type, 
Nonlinear Anal. 179 (2019), 237--253.

\bibitem{Fobook}
G. B. Folland, 
Introduction to partial differential equations, Second edition, 
Princeton University Press, Princeton, NJ, 1995. 

\bibitem{Fu19}
M. Fujiwara, 
\"Uber summierbare reihen und integrale, 
T\^{o}hoku Math. J. 15 (1919), 323--329. 

\bibitem{GT10}
G. Giesen and P. Topping, 
Ricci flow of negatively curved incomplete surfaces, 
Calc. Var. Partial Differential Equations 38 (2010), 357--367.

\bibitem{GT11}
G. Giesen and P. Topping, 
Existence of Ricci flows of incomplete surfaces, 
Comm. Partial Differential Equations 36 (2011), 1860--1880.

\bibitem{H89}
R.S. Hamilton, 
Lectures on geometric flows, unpublished manuscript, 1989.

\bibitem{HP85}
M. A. Herrero and M. Pierre, 
The Cauchy problem for $u_t=\Delta u^m$ when $0<m<1$, 
Trans. Amer. Math. Soc. 291 (1985), 145--158.

\bibitem{Hu20}
K. M. Hui, 
Uniqueness and time oscillating behaviour of finite points 
blow-up solutions of the fast diffusion equation, 
Proc. Roy. Soc. Edinburgh Sect. A 150 (2020), 2849--2870.

\bibitem{Ku19}
T. Kubota, 
Einige s\"atze den grenzwert betreffend, 
T\^{o}hoku Math. J. 15 (1919), 314--322. 

\bibitem{M19}
L. Ma, 
Yamabe flow and metrics of constant scalar curvature on a complete manifold, 
Calc. Var. Partial Differential Equations 58 (2019), 30, 16 pp.

\bibitem{M21}
L. Ma, 
Global Yamabe flow on asymptotically flat manifolds, 
J. Funct. Anal. 281 (2021),  109229, 14 pp.

\bibitem{MA99}
L. Ma and Y. An, 
The maximum principle and the Yamabe flow, 
Partial differential equations and their applications (Wuhan, 1999), 211--224, World Sci. Publ., River Edge, NJ, 1999.

\bibitem{MCZ12}
L. Ma, L. Cheng and A. Zhu, 
Extending Yamabe flow on complete Riemannian manifolds,  
Bull. Sci. Math. 136 (2012), 882--891.

\bibitem{QSbook}
P. Quittner and Ph. Souplet,  
Superlinear parabolic problems, Blow-up, global existence and steady states, 
Second edition, Birkh\"auser Advanced Texts: Basler Lehrb\"ucher, 
Birkh\"auser/Springer, Cham, 2019. 

\bibitem{SY88}
R. Schoen and S.-T. Yau, 
Conformally flat manifolds, Kleinian groups and scalar curvature, 
Invent. Math. 92 (1988), 47--71.

\bibitem{S19}
M. B. Schulz, 
Instantaneously complete Yamabe flow on hyperbolic space, 
Calc. Var. Partial Differential Equations 58 (2019), 190, 30 pp.

\bibitem{S20i}
M. B. Schulz, 
Incomplete Yamabe flows and removable singularities, 
J. Funct. Anal. 278 (2020), 108475, 18 pp.

\bibitem{S20u}
M. B. Schulz, 
Unconditional existence of conformally hyperbolic Yamabe flows, 
Anal. PDE 13 (2020), 1579--1590.

\bibitem{S20}
M. B. Schulz, 
Yamabe flow on non-compact manifolds with unbounded initial curvature, 
J. Geom. Anal. 30 (2020), 4178--4192.

\bibitem{SS03}
H. Schwetlick and M. Struwe, 
Convergence of the Yamabe flow for ``large'' energies, 
J. Reine Angew. Math. 562 (2003), 59--100.

\bibitem{T10}
P. Topping, 
Ricci flow compactness via pseudolocality, and flows with incomplete initial metrics, 
J. Eur. Math. Soc. 12 (2010), 1429--1451.

\bibitem{T15}
P. Topping, 
Uniqueness of instantaneously complete Ricci flows, 
Geom. Topol. 19 (2015), 1477--1492.

\bibitem{Vabook}
J. L. V\'azquez, 
The porous medium equation. Mathematical theory, 
Oxford Mathematical Monographs, 
The Clarendon Press, Oxford University Press, Oxford, 2007.

\bibitem{VW11}
J. L. V\'azquez and M. Winkler, 
The evolution of singularities in fast diffusion equations: 
infinite-time blow-down, 
SIAM J. Math. Anal. 43 (2011), 1499--1535.

\bibitem{VW11h}
J. L. V\'azquez and M. Winkler, 
Highly time-oscillating solutions for very fast diffusion equations, 
J. Evol. Equ. 11 (2011), 725--742.

\bibitem{Y94}
R. Ye, 
Global existence and convergence of Yamabe flow, 
J. Differential Geom. 39 (1994), 35--50.


\end{thebibliography}
\end{document}